\documentclass[11pt,a4paper]{article}
\usepackage[T1]{fontenc}
\usepackage[utf8]{inputenc}
\usepackage{lmodern}
\usepackage{amsmath,amssymb,amsthm}
\usepackage[a4paper,margin=27mm]{geometry}
\usepackage{microtype}
\usepackage{enumitem}
\usepackage[hidelinks]{hyperref}
\hypersetup{pdftitle={The Fundamental Semigroup and Associative Lifts of Topological Semigroups},
 pdfauthor={Stefano Bonzio and Andrea Loi}}
\usepackage{fancyhdr}
\newtheorem{theorem}{Theorem}[section]
\newtheorem{lemma}[theorem]{Lemma}
\newtheorem{proposition}[theorem]{Proposition}
\newtheorem{corollary}[theorem]{Corollary}
\theoremstyle{definition}
\newtheorem{definition}[theorem]{Definition}
\newtheorem{example}[theorem]{Example}
\theoremstyle{remark}
\newtheorem{remark}[theorem]{Remark}
\newcommand{\RR}{\mathbb R}
\newcommand{\ZZ}{\mathbb Z}
\newcommand{\id}{\operatorname{id}}
\newcommand{\Deck}{\operatorname{Deck}}
\newcommand{\Ad}{\operatorname{Ad}}

\title{\textbf{The Fundamental Semigroup and\\Associative Lifts of Topological Semigroups}}
\author{Stefano Bonzio \and Andrea Loi}
\date{}
\begin{document}
\maketitle

\begin{abstract}
We introduce the fundamental semigroup $\Lambda(S)$ of a nonempty
path-connected topological semigroup $S$, defined using right
translations and change-of-basepoint isomorphisms on its fundamental
group. We prove that this construction is closed under composition
and independent of the chosen basepoint up to isomorphism.
It is also invariant under homotopy
equivalences that preserve multiplication up to homotopy.
We characterize exactly when the fundamental
semigroup is a group and identify that group with an inner
automorphism group.
For a connected, locally path-connected and semilocally simply
connected topological semigroup, we show that multiplication admits
a continuous associative lift to the universal cover if and only if
$\Lambda(S)$ contains an idempotent. Every such idempotent can be
realized by an associative lift, and any two associative lifts are
isomorphic over $S$ through a deck transformation.
We obtain existence results when the fundamental group has finite
inner automorphism group or contains no subgroup isomorphic to
Thompson's group $F$, and when the underlying space is a
finite-dimensional CW complex. In contrast, we construct a
path-connected countable CW topological semigroup with cells in
arbitrarily high dimensions whose multiplication admits no
continuous associative lift to its universal cover.
\end{abstract}

\medskip
\noindent\small\textbf{2020 Mathematics Subject Classification.}
Primary 22A15; Secondary 55P10, 55U10, 55R35, 57M10.

\noindent\textbf{Key words and phrases.}
Fundamental semigroup, topological semigroup, universal covering,
associative lift, idempotent endomorphism, fundamental group,
homotopy idempotent, CW complex, Thompson's group $F$.
\normalsize

\section{Introduction}\label{sec:introduction}

Under the standard covering-space hypotheses, the universal cover
of a connected topological group inherits a compatible group
structure. For a topological semigroup, the analogous question is
more delicate. Indeed, if $p:\widetilde S\to S$ is a universal
covering of a connected, locally path-connected and semilocally
simply connected topological semigroup, the multiplication on $S$
always admits a continuous lift to $\widetilde S$. Such a lift,
however, need not be associative: the two bracketings of a triple
product can differ by a deck transformation. The problem is thus
to determine when a multiplication lift can be chosen associative.

A classical positive result is due to Kahn
\cite[Section~2, Theorem~1]{Kahn}: if the base semigroup contains
an idempotent, its simply connected covering admits a compatible
associative multiplication with a chosen point over that
idempotent made idempotent. Kahn's comparison theorem also concerns
covering semigroups with prescribed idempotents
\cite[Section~2, Theorem~2]{Kahn}. An analogous idempotent
hypothesis appears in the semigroup-groupoid setting of Mann'a
\cite[Theorem~4.5]{Manna2013}; related constructions for Lie
semigroups are discussed in \cite{Neeb1992}. These results leave
open a natural question: can associative lifting be characterized
without assuming that $S$ itself has an idempotent?

To formulate our main result, we introduce an algebraic invariant
of a nonempty path-connected topological semigroup $S$, called its
\emph{fundamental semigroup}. Fix $a\in S$ and let
$R_a:S\to S$ denote right translation, $R_a(x)=xa$.
For any path $\alpha:a\to a^2$, let
$C_\alpha:\pi_1(S,a^2)\to\pi_1(S,a)$ be the corresponding
change-of-basepoint isomorphism. We set
\[
 \Lambda(S,a)=
 \bigl\{C_\alpha\circ (R_a)_*:\alpha:a\to a^2\bigr\}
 \subseteq\operatorname{End}(\pi_1(S,a)).
\]
We show in Section~\ref{sec:fundamental-semigroup} that this
family is closed under composition and that its isomorphism type
is independent of the basepoint $a$; we denote the resulting
semigroup, up to isomorphism, by $\Lambda(S)$. It depends on both
the topology and the multiplication on $S$ and is defined without
local covering-space hypotheses. For any semigroup $T$, we write
$E(T)=\{t\in T:t^2=t\}$ for its set of idempotents.

We use the following terminology for multiplication lifts on
universal coverings. A universal covering
$p:\widetilde S\to S$ is understood to have path-connected and
simply connected total space. Whenever a universal covering $p$
is fixed, we reserve the notation $G=\Deck(p)$ for its deck
transformation group. A \emph{continuous lift of multiplication} is
a continuous map $\mu:\widetilde S\times\widetilde S\to\widetilde S$
satisfying
\begin{equation}\label{eq:compatibility}
 p(\mu(u,v))=p(u)p(v)\qquad(u,v\in\widetilde S).
\end{equation}
It is \emph{associative} if
\begin{equation}\label{eq:associativity}
 \mu(\mu(u,v),w)=\mu(u,\mu(v,w))
 \qquad(u,v,w\in\widetilde S).
\end{equation}
In this case $p$ is a topological semigroup homomorphism.

Our principal theorem gives an exact algebraic criterion for the
existence of an associative lift and an accompanying uniqueness
statement.

\begingroup
\renewcommand{\thetheorem}{A}\renewcommand{\theHtheorem}{A}%
\begin{theorem}[Idempotent criterion for associative lifts]\label{thm:main}
Let $S$ be a nonempty connected, locally path-connected and
semilocally simply connected topological semigroup, and let
$p:\widetilde S\to S$ be its universal covering. Then:
\begin{enumerate}[label=\textup{(\alph*)},leftmargin=*]
\item The multiplication of $S$ admits a continuous associative
lift to $\widetilde S$ if and only if
\begin{equation}\label{eq:main-criterion}
 E(\Lambda(S))\ne\varnothing.
\end{equation}
\item Whenever associative lifts exist, any two of them are
isomorphic over $S$: if $\mu$ and $\mu'$ are associative lifts,
there exists a deck transformation $g\in\Deck(p)$ such that
\[
 \mu'(gu,gv)=g\mu(u,v)\qquad(u,v\in\widetilde S).
\]
\end{enumerate}
\end{theorem}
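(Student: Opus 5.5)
The plan is to reduce everything to statements about a single basepoint. Fix $a\in S$, a point $\tilde a\in p^{-1}(a)$, and identify $G=\Deck(p)$ with $\pi_1(S,a)$ in the usual way via $\tilde a$.

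\emph{Step 1: lifts are classified by paths.} Since $\widetilde S\times\widetilde S$ is simply connected and locally path-connected, the lifting criterion applies. A continuous lift $\mu$ of multiplication exists, and it is uniquely determined by the point $b=\mu(\tilde a,\tilde a)\in p^{-1}(a^2)$. Such points correspond bijectively to homotopy classes of paths $\alpha:a\to a^2$: take the endpoint of the lift of $\alpha$ starting at $\tilde a$. Write $\mu_\alpha$ for the lift attached to $\alpha$.

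By the same uniqueness of lifts, $u\mapsto\mu_\alpha(u,\tilde a)$ covers $R_a$, and $u\mapsto \mu_\alpha(\tilde a,u)$ covers $L_a$. The two triple products $\mu(\mu(u,v),w)$ and $\mu(u,\mu(v,w))$ both lift the map $(x,y,z)\mapsto xyz$ on a simply connected space. Hence they coincide everywhere as soon as they agree at $(\tilde a,\tilde a,\tilde a)$. I will use the same "agree at one point" principle repeatedly below.

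\emph{Step 2: associativity as a path condition.} $\mu_\alpha(b,\tilde a)$ is the endpoint of the lift, starting at $b$, of the path $R_a\circ\alpha:a^2\to a^3$. Likewise $\mu_\alpha(\tilde a,b)$ is the endpoint of the lift of $L_a\circ\alpha$ from $b$. Therefore $\mu_\alpha$ is associative if and only if
\[
 [R_a\circ\alpha]=[L_a\circ\alpha]
\]
as path classes $a^2\to a^3$. Equivalently, the loop $\gamma_\alpha=(R_a\circ\alpha)\cdot\overline{(L_a\circ\alpha)}$ at $a^2$ is null-homotopic.

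\emph{Step 3: translate into $\Lambda(S,a)$.} Put $\varphi_\alpha=C_\alpha\circ(R_a)_*$. The map $H(x,t)=x\,\alpha(t)$ is a homotopy from $R_a$ to $R_{a^2}=R_a\circ R_a$ whose basepoint track is $L_a\circ\alpha$. This gives $(R_{a^2})_*=C_{L_a\alpha}^{-1}(R_a)_*$, with the appropriate orientation convention. Naturality of change of basepoint, $(R_a)_*C_\alpha=C_{R_a\alpha}(R_a)_*$, then yields
\[
 \varphi_\alpha^2=C_\alpha\,C_{R_a\alpha}\,(R_a)_*(R_a)_*=C_\alpha\,C_{\gamma_\alpha}\,(R_a)_*,
\]
where $C_{\gamma_\alpha}$ is conjugation by $[\gamma_\alpha]$. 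Hence $\varphi_\alpha$ is idempotent if and only if $[\gamma_\alpha]$ centralizes $\operatorname{im}(R_a)_*\subseteq\pi_1(S,a^2)$. In particular, an associative lift gives an idempotent, which is the forward direction of (a). I will use basepoint independence of $\Lambda(S)$ from Section~\ref{sec:fundamental-semigroup} to pass freely between $\Lambda(S)$ and $\Lambda(S,a)$.

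\emph{Step 4 (main obstacle): the converse.} Suppose $\varphi_\alpha$ is idempotent, so that $[\gamma_\alpha]$ only centralizes $\operatorname{im}(R_a)_*$ and need not be trivial. The plan is to modify the data so that the obstruction vanishes. I will try two moves.
\begin{itemize}[leftmargin=*]
\item Replace $\alpha$ by $\alpha\cdot\delta$ with $\delta$ a loop at $a^2$. This changes $\gamma$ by the commutator-type expression $(\delta a)\,(a\delta)^{-1}$, transported appropriately. I will check whether $\delta$ can be chosen in $\operatorname{im}(R_a)_*$ to cancel $\gamma$, using idempotency of $\varphi_\alpha$ to split $\pi_1=\operatorname{im}\varphi\rtimes\ker\varphi$.
\item If that fails, change the basepoint. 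Pick $e=a\cdot a'$ (or a suitable product) at which the transported idempotent becomes an honest retraction, and compute $\gamma$ there. The aim is to make the relevant loop lie in the image of $\varphi$, where centrality forces triviality.
\end{itemize}
I expect this step to be the main difficulty. The idempotent condition is "$\gamma$ is central in the image", while associativity needs "$\gamma=1$". Closing that gap should require an argument in the spirit of Kahn's construction, exploiting that $\varphi$ retracts onto its image.

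\emph{Step 5: uniqueness, part (b).} Let $\mu=\mu_\alpha$ and $\mu'=\mu_{\alpha'}$ both be associative. For a deck transformation $g$, the map $(u,v)\mapsto g^{-1}\mu'(gu,gv)$ is again an associative lift. By Step 1 it is determined by its value at $(\tilde a,\tilde a)$, so it suffices to find $g$ with $\mu'(g\tilde a,g\tilde a)=g\,\mu(\tilde a,\tilde a)$. Writing $g$ as a loop class $c$ at $a$, this is an equation of the form
\[
 [\alpha']=c\cdot[\alpha]\cdot\bigl(c\,c\bigr)^{-1}
\]
in path classes, up to the precise bookkeeping of the lifts $g\tilde a$ and the products of loops. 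I would solve it using that both $\varphi_\alpha$ and $\varphi_{\alpha'}$ are idempotents satisfying the strong condition $\gamma=1$ from Step 2, by a twisted-conjugacy argument in $G$.
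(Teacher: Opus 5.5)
\textbf{Verdict: genuine gap.} Your Steps 1--3 are correct and give a path-class version of the paper's deck data. Write $\lambda,\rho,d$ for the data of $\mu_\alpha$ from Lemma~\ref{lem:deck-data}, so that $\lambda$ corresponds to your $\varphi_\alpha$. Then $\mu_\alpha$ is associative if and only if $[R_a\circ\alpha]=[L_a\circ\alpha]$, and $C_\alpha[\gamma_\alpha]$ corresponds under $\Phi_{\tilde a}$ to the defect $d$. Your formula $\varphi_\alpha^2=\Ad_{C_\alpha[\gamma_\alpha]}\circ\varphi_\alpha$ is the identity $\lambda^2=\Ad_{d}\circ\lambda$. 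This proves the forward direction of (a).

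The converse, which is the substance of the theorem, is not proved: Step~4 only lists candidate moves, and neither works as stated.
\begin{enumerate}
\item The second move rests on a false premise. An element of $\operatorname{im}\varphi$ that centralizes $\operatorname{im}\varphi$ lies in $Z(\operatorname{im}\varphi)$, which need not be trivial (for instance whenever $\pi_1$ is abelian).
\item The first move fails as you restrict it. For the right-zero multiplication $xy=y$ on a space with nontrivial $\pi_1$, one has $\operatorname{im}(R_a)_*=1$, yet every lift $\mu_\alpha$ with $[\alpha]\ne1$ is non-associative.
\item More fundamentally, the relations you derive cannot close the gap on their own. The abstract data $G=\ZZ$, $\lambda=\rho=\id$, $d=1$ satisfy $\lambda^2=\Ad_d\circ\lambda$, $\rho^2=\Ad_{d^{-1}}\circ\rho$, commuting images, and ``$d$ centralizes $\operatorname{im}\lambda$''. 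Yet the correction equation $d=\lambda(c)^{-1}\rho(c)$ reads $1=-c+c=0$, which has no solution.
\end{enumerate}

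\textbf{The missing ingredient} is a second relation coming from four-fold products. Rebracketing $\mu(\mu(\mu(u,v),w),z)$ into $\mu(u,\mu(v,\mu(w,z)))$ in the two possible ways gives the pentagon identity \eqref{eq:pentagon}, $d^2=\lambda(d)\,d\,\rho(d)$. Here $\rho$ is the left-translation endomorphism, which you introduce but never use. You also need that $\operatorname{im}\lambda$ and $\operatorname{im}\rho$ commute elementwise. The toy data above violate exactly the pentagon.

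With the pentagon available, the argument goes through:
\begin{enumerate}
\item If $\lambda$ is idempotent, then $d$ commutes with $\lambda(d)$, so the pentagon collapses to $d=\lambda(d)\rho(d)$.
\item Applying $\lambda$ and $\rho$, and using \eqref{eq:squares} once more, gives $\lambda\rho(d)=\rho\lambda(d)=1$.
\item Hence $c=\rho(d)\lambda(d)^{-1}$ satisfies $\lambda(c)^{-1}\rho(c)=d$, so $c\mu$ is associative (Lemma~\ref{lem:normalize}).
\end{enumerate}
In your language, this produces a loop $\ell$ at $a$ with $[R_a\circ\ell]\,[\gamma_\alpha]\,[L_a\circ\ell]^{-1}=1$. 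So your first move does work, once the correcting loop is allowed to lie outside $\operatorname{im}(R_a)_*$.

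\textbf{Part (b) is likewise left open.} Your reduction to the single point $(\tilde a,\tilde a)$ is fine, and so is your equation, reading $c\,c$ as the pointwise-product loop at $a^2$. But the ``twisted-conjugacy argument'' is never supplied. Here is what is needed. Write $\mu'=c\mu$ with $c\in\Deck(p)$. Associativity of both lifts gives $\lambda(c)=\rho(c)=:h$. Idempotency of $\lambda$ and of $\rho$ (you never established the latter) gives $\lambda(h)=\rho(h)=h$. Then $g=ch^{-2}$ satisfies $c\,\lambda(g)\rho(g)=g$, which is exactly $\mu'(gu,gv)=g\mu(u,v)$.
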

\endgroup
\addtocounter{theorem}{-1}

Unlike the earlier sufficient conditions requiring an idempotent
in $S$, Theorem~\ref{thm:main} only requires an idempotent in
$\Lambda(S)$. For instance, the semigroup
$S=(0,\infty)\times\mathbb S^1$, with
$(t,z)(s,w)=(t+s,zw)$, has no idempotent points, while
$(0,\infty)\times\RR$ carries the associative multiplication
$(t,u)(s,v)=(t+s,u+v)$ lifting that of $S$.
The proof also shows that every idempotent of $\Lambda(S)$
is realized by some associative lift, not merely that the
existence of an idempotent guarantees the existence of a lift.

The criterion yields three useful existence results.

For a group $H$, write $Z(H)$ for its center and
$\operatorname{Inn}(H)$ for its group of inner automorphisms, so that
$\operatorname{Inn}(H)\cong H/Z(H)$.

\begin{corollary}[Finite inner automorphism group]\label{cor:finite-inner}
Let $S$ satisfy the hypotheses of Theorem~\ref{thm:main}.
If $\operatorname{Inn}(\pi_1(S,a))$ is finite for some
(equivalently, every) $a\in S$, then $\Lambda(S)$ is finite
and $E(\Lambda(S))\ne\varnothing$.
Consequently, the multiplication of $S$ admits a continuous
associative lift to its universal cover, unique up to isomorphism
over $S$. In particular, this holds whenever the fundamental group
is abelian or finite.
\end{corollary}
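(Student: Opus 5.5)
The argument has two parts: first show that $\Lambda(S,a)$ is finite, then use the general fact that every finite semigroup contains an idempotent, and finally appeal to Theorem~\ref{thm:main}.

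\emph{Finiteness.} Fix $a\in S$ and write $\pi=\pi_1(S,a)$.

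Fix one path $\alpha_0:a\to a^2$ and set $\phi_0=C_{\alpha_0}\circ(R_a)_*$. Let $\alpha:a\to a^2$ be any other path. Then $C_\alpha=c_\gamma\circ C_{\alpha_0}$, where $\gamma=[\alpha\cdot\bar\alpha_0]\in\pi$ and $c_\gamma$ is conjugation by $\gamma$, for the appropriate path-composition convention. Consequently every element of $\Lambda(S,a)$ has the form $c_\gamma\circ\phi_0$ with $c_\gamma\in\operatorname{Inn}(\pi)$. This gives a surjection
\[
\operatorname{Inn}(\pi)\to\Lambda(S,a),\qquad c_\gamma\mapsto c_\gamma\circ\phi_0 .
\]
The surjection is well defined because the value depends only on the automorphism $c_\gamma$. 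It follows that $|\Lambda(S,a)|\le|\operatorname{Inn}(\pi)|<\infty$.

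\emph{Independence of the basepoint.} The phrase ``for some (equivalently, every) $a$'' holds because $S$ is path-connected, so the fundamental groups at different basepoints are isomorphic. Isomorphic groups have isomorphic inner automorphism groups.

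\emph{Idempotent.} Section~\ref{sec:fundamental-semigroup} shows that $\Lambda(S,a)$ is closed under composition, so it is a nonempty finite semigroup. Take any $\phi\in\Lambda(S,a)$. The powers $\phi,\phi^2,\dots$ lie in a finite set, so $\phi^m=\phi^{m+k}$ for some $m\ge1$ and $k\ge1$. Choose a multiple $n$ of $k$ with $n\ge m$. Then
\[
\phi^{2n}=\phi^{n}\circ\phi^{n}=\phi^{n},
\]
since $\phi^{n+jk}=\phi^n$ for all $j\ge0$ once $n\ge m$. Hence $E(\Lambda(S))\ne\varnothing$.

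\emph{Conclusion.} Theorem~\ref{thm:main}(a) now gives a continuous associative lift, and Theorem~\ref{thm:main}(b) gives its uniqueness up to isomorphism over $S$.

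\emph{Special cases.} If $\pi$ is abelian, then $\operatorname{Inn}(\pi)$ is trivial, so $\Lambda(S)$ consists of the single element $(R_a)_*$ composed with the canonical base change. If $\pi$ is finite, then $\operatorname{Inn}(\pi)\cong\pi/Z(\pi)$ is finite. In both cases the hypothesis of the corollary holds.

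The only step that needs care is the identity $C_\alpha=c_\gamma\circ C_{\alpha_0}$, specifically the orientation and composition conventions for paths. It is routine, because any two change-of-basepoint isomorphisms along paths with the same endpoints differ by an inner automorphism.
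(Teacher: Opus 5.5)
Your proposal is correct and follows the paper's proof essentially step by step. You use the inner-orbit description \eqref{eq:lambda-inner-orbit} to get a surjection $\operatorname{Inn}(\pi_1(S,a))\to\Lambda(S,a)$, note that a nonempty finite semigroup has an idempotent, and then invoke Theorem~\ref{thm:main}; the only difference is that you also write out the standard finite-semigroup idempotent argument and the basepoint independence of the hypothesis, which the paper takes for granted.
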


Recall that Thompson's group $F$ admits the infinite presentation
\[
 F=\bigl\langle x_0,x_1,x_2,\ldots\ \bigm|\
 x_i^{-1}x_jx_i=x_{j+1}\quad (0\le i<j)\bigr\rangle.
\]
The universal group associated with conjugacy idempotents
in Freyd--Heller \cite[Section~4]{FreydHeller1993}
is isomorphic to this group; see also
\cite{FioreLeinster} for a categorical characterization of $F$.

\begin{corollary}[Fundamental groups without Thompson's $F$]
\label{cor:thompson-free}
Let $S$ satisfy the hypotheses of Theorem~\ref{thm:main}.
If $\pi_1(S,a)$ contains no subgroup isomorphic to Thompson's
group $F$, then $E(\Lambda(S))\ne\varnothing$. Consequently,
the multiplication of $S$ admits a continuous associative lift
to its universal cover, unique up to isomorphism over $S$.
This holds in particular when the fundamental group is
solvable, residually finite, torsion, or linear over a field.
In view of the recent nonamenability result for
Thompson's group $F$ \cite{OpenAI2026Nonamenable,KimLodha2026},
the conclusion also holds when the fundamental group is amenable.
\end{corollary}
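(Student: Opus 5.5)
The plan is to reduce the statement to the Freyd--Heller analysis of conjugacy idempotents, using only the coset structure of $\Lambda(S,a)$. Write $H=\pi_1(S,a)$. Fix one path $\alpha:a\to a^2$ and put $\lambda=C_\alpha\circ(R_a)_*\in\operatorname{End}(H)$. Any other path $a\to a^2$ has the form $\gamma\cdot\alpha$ for a loop $\gamma$ at $a$, and $C_{\gamma\cdot\alpha}=c_{[\gamma]}\circ C_\alpha$, where $c_g$ denotes conjugation by $g$. Hence
\[
 \Lambda(S,a)=\{c_g\circ\lambda : g\in H\}.
\]
Since $\Lambda(S,a)$ is closed under composition (Section~\ref{sec:fundamental-semigroup}), there is $h\in H$ with $\lambda\circ\lambda=c_h\circ\lambda$. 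So $\lambda$ is a conjugacy idempotent in the sense of Freyd--Heller.

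Using $\lambda\circ c_g=c_{\lambda(g)}\circ\lambda$, one computes
\[
 (c_g\lambda)^2=c_g\,c_{\lambda(g)}\,c_h\,\lambda .
\]
Therefore $c_g\lambda$ is idempotent if and only if $\lambda(g)h$ centralizes $\lambda(H)$. The goal is thus reduced to the following purely group-theoretic task: from a conjugacy idempotent $(\lambda,h)$ on a group $H$ with no copy of $F$, produce $g\in H$ such that $\lambda(g)h$ centralizes $\lambda(H)$.

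First, I will build the Freyd--Heller homomorphism $\Phi:F\to H$. Set $\Phi(x_0)=h^{\pm1}$ and $\Phi(x_n)=\lambda^n(h)^{\pm1}$, with signs fixed by the convention for $c_h$. The relations $x_i^{-1}x_jx_i=x_{j+1}$ for $i<j$ should follow by applying $\lambda^i$ to the identity $\lambda^2(y)=c_h(\lambda(y))$ with $y=\lambda^{j-i-1}(h)$. Checking that the signs line up with the stated presentation is routine bookkeeping.

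Next, I use the standard fact that every proper quotient of $F$ is abelian, since every nontrivial normal subgroup of $F$ contains $[F,F]$. By hypothesis $\Phi$ is not injective, so $\Phi(F)$ is abelian. In particular $h$, $\lambda(h)$ and $\lambda^2(h)$ pairwise commute. Feeding this into the defining relation $x_0^{-1}x_1x_0=x_2$ gives $\lambda(h)=\lambda^2(h)$.

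The main obstacle is the last step: turning this commutativity into an explicit $g$ with $\lambda(g)h\in C_H(\lambda(H))$. My first attempt is $g=\lambda(h)^{-1}$ (or $g=h^{-1}$). Then $\lambda(g)h=\lambda^2(h)^{-1}h=\lambda(h)^{-1}h$. To check that this centralizes $\lambda(H)$, I use $\lambda^2=c_h\lambda$ together with $\lambda(h)=\lambda^2(h)=h\lambda(h)h^{-1}$ and $\lambda$ applied to these identities. Concretely, for $y\in H$, one compares $c_{\lambda(h)}\lambda(y)$ with $\lambda(c_h y)=c_{\lambda(h)}\lambda(y)$ and with $\lambda^2(\lambda(y))$. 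If the naive choice fails, I will follow Freyd--Heller's splitting argument directly: there the failure of splitting is shown to force $\Phi$ to be injective, so noninjectivity of $\Phi$ yields the required $g$.

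Finally, the listed special cases follow from closure properties that $F$ fails. $F$ is not solvable, since it contains free abelian groups of every rank and iterated wreath products. $F$ is torsion-free and infinite, so it is not torsion. Subgroups of residually finite or linear groups inherit those properties, while $F$ is neither, since $[F,F]$ is infinite and simple. For the amenable case, subgroups of amenable groups are amenable and $F$ is nonamenable by the cited result. The lifting conclusion then follows from Theorem~\ref{thm:main}.
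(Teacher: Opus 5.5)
Your reduction is the paper's: $\Lambda(S,a)$ is the inner orbit of $\lambda$, closure gives $\lambda^2=c_h\circ\lambda$, and everything comes down to showing that a non-injective Freyd--Heller map $\Phi\colon F\to H$ forces an idempotent in that orbit. The paper settles this by quoting Lemma~\ref{lem:freyd-heller-normalization}, which also contains the much harder converse. Your fallback does the same thing.

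Your primary route is different. You prove only the needed direction, using the Cannon--Floyd--Parry theorem that proper quotients of $F$ are abelian \cite{CannonFloydParry1996}. That route is more self-contained and it does close, but your sketched check stops short. The comparisons you describe amount to computing $\lambda^3$ two ways:
\[
 \lambda^3=\lambda c_h\lambda=c_{\lambda(h)}\lambda^2,\qquad \lambda^3=c_h\lambda^2 .
\]
This shows only that $c_h$ and $c_{\lambda(h)}$ agree on $\lambda^2(H)=h\lambda(H)h^{-1}$, not on $\lambda(H)$.

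The missing step is to transport this back to $\lambda(H)$. For $l\in\lambda(H)$ the agreement reads $h^2lh^{-2}=\lambda(h)h\,l\,h^{-1}\lambda(h)^{-1}$. Since $h\lambda(h)=\lambda(h)h$, the right side equals $h\lambda(h)\,l\,\lambda(h)^{-1}h^{-1}$. Conjugating by $h^{-1}$ gives $hlh^{-1}=\lambda(h)l\lambda(h)^{-1}$. Hence $\lambda^2=c_{\lambda(h)}\circ\lambda$, and
\[
 (c_{h^{-1}}\lambda)^2=c_{h^{-1}}\,c_{\lambda(h)^{-1}}\,\lambda^2=c_{h^{-1}}\lambda .
\]
So $g=h^{-1}$ works, and $g=\lambda(h)^{-1}$ yields the same endomorphism. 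Only $[h,\lambda(h)]=1$ is used; $\lambda^2(h)=\lambda(h)$ is not needed, and no appeal to Freyd--Heller's splitting argument is required.

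Your route produces the explicit idempotent $\Ad_{d^{-1}}\circ\theta_\alpha$ (in the paper's notation) as soon as $d$ commutes with $\theta_\alpha(d)$, and it avoids the categorical machinery. The paper's citation is shorter and also records that injectivity of $\kappa_{\varphi,d}$ exactly characterizes the obstruction, which the corollary itself does not use.

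Two justifications in your last paragraph need repair.
\begin{itemize}
\item \emph{Linear groups.} That $[F,F]$ is infinite and simple does not by itself exclude linearity: $PSL_2(\mathbb C)$ is infinite, simple and linear. You need that $F$ is finitely generated. Then a copy of $F$ inside a linear group would be residually finite by Mal'cev's theorem \cite{Malcev1940}, contradicting your residual-finiteness argument. This is exactly how the paper argues.
\item \emph{Solvable groups.} Free abelian subgroups of every rank are irrelevant to solvability. What does the job is that the iterated wreath products in $F$ have unbounded derived length, or simply that $[F,F]$ is nonabelian simple.
\end{itemize}
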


Examples of nonabelian groups covered by this criterion include
nonabelian free groups, finitely generated nilpotent groups,
and linear groups such as $SL_2(\mathbb Z)$.

Recall that a CW complex is \emph{finite-dimensional} if its cell
dimensions are bounded, although it may have infinitely many cells.
Every connected CW complex is path-connected, locally path-connected,
and semilocally simply connected, and hence admits a universal cover
\cite[Section~1.3 and Appendix A]{Hatcher}.

\begin{corollary}[Finite-dimensional CW semigroups]\label{cor:CW}
Let $S$ be a nonempty connected topological semigroup whose
underlying space is a finite-dimensional CW complex. Then
$E(\Lambda(S))\ne\varnothing$. Thus $S$ admits a continuous
associative lift to its universal cover, unique up to isomorphism
over $S$.
\end{corollary}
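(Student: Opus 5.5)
The plan is to reduce the claim to the splitting theorem for homotopy idempotents on finite-dimensional complexes (Hastings--Heller), and then read off an idempotent of $\Lambda(S,a)$ from the splitting. Theorem~\ref{thm:main} then gives the lift and its uniqueness. Corollary~\ref{cor:thompson-free} cannot be used directly: $\pi_1$ of a finite-dimensional CW complex may contain $F$, since every finitely presented group occurs as $\pi_1$ of a $2$-complex.

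\emph{Step 1: $R_a$ is a free homotopy idempotent.} Fix $a\in S$ and a path $\alpha:a\to a^2$. Associativity gives $R_a\circ R_a=R_{a^2}$. The homotopy $H_t(x)=x\,\alpha(t)$ connects $R_a$ to $R_{a^2}$, and it is continuous because multiplication is jointly continuous. Hence $R_a\circ R_a\simeq R_a$ freely. The track of the basepoint under $H$ is $\alpha$ itself. So the induced identity on fundamental groups reads
\[
 C_\alpha\circ(R_a)_*\circ C_\alpha\circ(R_a)_* = \iota\circ C_{\alpha}\circ (R_a)_*
\]
for an explicit inner automorphism $\iota$ of $\pi_1(S,a)$. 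This is the algebraic form of a free homotopy idempotent. I would also record here that varying $\alpha$ by loops at $a$ shows $\Lambda(S,a)$ is exactly the set $\{\operatorname{inn}(g)\circ\lambda_0 : g\in\pi_1(S,a)\}$ for any fixed $\lambda_0=C_{\alpha_0}\circ(R_a)_*$. Equivalently, $\Lambda(S,a)$ consists of all endomorphisms induced by $R_a$ along all possible basepoint tracks.

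\emph{Step 2: splitting.} Since $S$ is a finite-dimensional CW complex, the Hastings--Heller theorem says every free homotopy idempotent on $S$ splits. So there are a CW complex $Y$ and maps $u:S\to Y$, $d:Y\to S$ with $u\circ d\simeq \id_Y$ and $d\circ u\simeq R_a$. Equivalently, in their formulation, the free homotopy idempotent is freely homotopic to a map that is a pointed homotopy idempotent at some basepoint. I would use the pointed version: it provides a basepoint $b$, a map $f\simeq R_a$, and a pointed homotopy $f\circ f\simeq f$ rel $b$. Consequently $f_*:\pi_1(S,b)\to\pi_1(S,b)$ is a genuine idempotent endomorphism.

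\emph{Step 3: transport back to $\Lambda$.} This is the step I expect to be the main obstacle, because the inner-automorphism bookkeeping has to be exact rather than up to conjugacy. Let $K$ be the free homotopy from $R_a$ to $f$, and choose a path $\beta:b\to a$. Then $C_\beta^{-1}\circ f_*\circ C_\beta$ is an idempotent endomorphism of $\pi_1(S,a)$. Tracking $K$ on the basepoint $a$ gives a path from $a^2$ to $f(a)$. Concatenating it with the appropriate paths built from $\beta$ and $f\circ\beta$ expresses this conjugated idempotent as $C_{\gamma}\circ(R_a)_*$ for a suitable path $\gamma:a\to a^2$, using naturality of change-of-basepoint under homotopies. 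By Step 1's description of $\Lambda(S,a)$ as all $R_a$-induced endomorphisms along arbitrary tracks, this element lies in $\Lambda(S,a)$. Since $\Lambda(S)$ is independent of basepoint up to isomorphism, $E(\Lambda(S))\ne\varnothing$.

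Finally, Theorem~\ref{thm:main}(a) gives the associative lift and Theorem~\ref{thm:main}(b) its uniqueness up to isomorphism over $S$. If the pointed form of Hastings--Heller is not directly available, I would instead derive it from the free splitting: take $f=d\circ u$, whose square is $d(ud)u\simeq du$, and adjust the basepoint homotopy using $u d\simeq\id_Y$ to kill the residual inner automorphism $\iota$.
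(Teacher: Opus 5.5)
Your architecture matches the paper's ($R_a\simeq R_{a^2}=R_a^2$, Hastings--Heller, an honest idempotent in the inner orbit of $\theta_\alpha$, then Theorem~\ref{thm:main}). However, the one nontrivial step is not actually proved. Theorem~\ref{thm:HH} as stated gives only a free splitting $R_a\simeq d\circ u$, $u\circ d\simeq\id_Y$. It does not supply a map $f\simeq R_a$ with a fixed point $b$ and a homotopy $f^2\simeq f$ rel $b$. Calling these ``equivalent'' hides exactly the work: passing from a free splitting to an idempotent on $\pi_1$ in the inner orbit is the content of the paper's Lemma~\ref{lem:splitting_group} (a form of Freyd--Heller's (L1)). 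It is not a formality, since Theorem~\ref{thm:counterexample} shows that the inner automorphism $\iota$ of your Step~1 cannot be removed in general.

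Your fallback does not close this gap. The map $d\circ u$ need not fix any point, so it is not a pointed homotopy idempotent without a further homotopy, and ``adjust the basepoint homotopy to kill $\iota$'' restates the goal. By contrast, Step~3, which you flag as the main obstacle, is routine naturality bookkeeping and is correct as sketched. There is also a small slip in Step~1: the track of $a$ under $x\mapsto x\alpha(t)$ is $L_a\circ\alpha:a^2\to a^3$, not $\alpha$. The relation $\theta_\alpha^2=\iota\circ\theta_\alpha$ still holds by the closure computation.

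The missing argument is short once you use the honest identity coming from $u\circ d\simeq\id_Y$, rather than the up-to-conjugacy one coming from $(du)^2\simeq du$.
\begin{itemize}
\item Fix $y_0\in Y$ and let $\delta:y_0\to y_1=ud(y_0)$ be the track of a homotopy from $\id_Y$ to $ud$. By \eqref{eq:standard-homotopy}, $C_\delta\circ(ud)_*=\id_{\pi_1(Y,y_0)}$.
\item At $x_1=d(y_0)$ set $e=d_*\circ C_\delta\circ u_*\in\operatorname{End}(\pi_1(S,x_1))$. Then $e^2=d_*\circ\bigl(C_\delta\circ u_*\circ d_*\bigr)\circ C_\delta\circ u_*=e$.
\item By \eqref{eq:standard-naturality}, $e=C_{d\circ\delta}\circ(du)_*=\theta_{du,d\circ\delta}$, so no fixed point is needed.
\item Your Step~3 computation (conjugating by a change-of-basepoint isomorphism, then using a homotopy $R_a\simeq du$) turns $e$ into an idempotent $C_\gamma\circ(R_a)_*$ with $\gamma:a\to a^2$, i.e.\ an element of $E(\Lambda(S,a))$.
\end{itemize}
The paper performs the same computation directly at $a$. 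It first chooses $\sigma:a\to du(a)$ with $u\circ\sigma\simeq\delta$ rel endpoints, where $\delta$ is now the track at $u(a)$.
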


It is worth noting that finite dimensionality does not prevent
the fundamental group of a CW complex from containing Thompson's
group $F$. Indeed, $F$ is finitely presented (in fact, it admits
a presentation with two generators and two relations; see
\cite[Section~3]{CannonFloydParry1996}). Hence there exists a finite
connected two-dimensional CW complex $X$ with $\pi_1(X)\cong F$.
Endowing $X$ with the left-zero multiplication $xy=x$, we obtain
a finite-dimensional CW topological semigroup with fundamental
group $F$. Thus Corollary~\ref{cor:CW} does not follow from
the group-theoretic hypothesis in Corollary~\ref{cor:thompson-free}.

These positive results have a sharp limitation in the
infinite-dimensional setting.

\begingroup
\renewcommand{\thetheorem}{B}\renewcommand{\theHtheorem}{B}%
\begin{theorem}[An infinite-dimensional obstruction]\label{thm:counterexample}
There is a nonempty path-connected countable CW topological
semigroup $S_\infty$ with cells in arbitrarily high dimensions and
$\pi_1(S_\infty)\cong F$, Thompson's nonabelian group, such that
\[
 E(\Lambda(S_\infty))=\varnothing.
\]
Equivalently, its multiplication has continuous lifts to the
universal cover, but no associative lift.
\end{theorem}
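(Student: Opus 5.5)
The plan is to realize, as the map $(R_a)_*$ of an honest topological semigroup, the Freyd--Heller endomorphism of $F$. This endomorphism is idempotent up to conjugacy but not conjugate to an idempotent. Once that is done, $\Lambda(S_\infty,a)$ consists exactly of the maps $c_g\circ\varphi$, with $c_g$ inner and $\varphi$ the Freyd--Heller endomorphism. None of these is idempotent, by the universal property in \cite[Section~4]{FreydHeller1993}. The ``equivalently'' clause then follows from Theorem~\ref{thm:main}(a), together with the general fact, recalled in the introduction, that continuous lifts of multiplication always exist under the covering hypotheses. Those hypotheses hold for connected CW complexes.

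\textbf{Step 1 (the space).} Let $X$ be a countable CW model of $K(F,1)$, and let $f:X\to X$ be a cellular map inducing the endomorphism $\varphi:x_i\mapsto x_{i+1}$. Form the mapping telescope
\[
T=\Bigl(\textstyle\coprod_{n\ge 0}X\times[n,n+1]\Bigr)\big/\bigl((x,n+1)\sim(f(x),n+1)\bigr),
\]
where the left point lies in the $n$-th block and the right point in the $(n+1)$-st. Give $T$ the height function $h:T\to[0,\infty)$ and the forward flow $P_r:T\to T$ for $r\ge0$. The flow is defined by increasing the height coordinate and applying $f$ each time an integer level is crossed. I expect to check directly that $P_0=\id$, $P_rP_s=P_{r+s}$, $h(P_rq)=h(q)+r$, and that $(r,q)\mapsto P_rq$ is continuous. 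Since $X$ is countable CW and $f$ is cellular, $T$ is a countable CW complex. It has cells in arbitrarily high dimension because $F$ contains $\ZZ^n$ for every $n$, so $K(F,1)$ cannot be finite dimensional.

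\textbf{Step 2 (the multiplication).} Set $S_\infty=T$ with $p\cdot q=P_{h(p)}(q)$. Associativity follows from the flow identities:
\[
(pq)r=P_{h(p)+h(q)}(r)=P_{h(p)}P_{h(q)}(r)=p(qr).
\]
Continuity follows from the joint continuity of the flow. The right translation $R_a$ is $q\mapsto P_{h(q)}(a)$. This is awkward, so I would compare it up to homotopy with the left translation $L_a=P_{h(a)}$, or simply use left translations if the convention for $\Lambda$ can be dualized. Concretely, $R_a$ sends the telescope into the flow line of $a$ pushed forward, and I expect to compute $(R_a)_*$ via the homotopy $P_{th(q)}$. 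This is a point to double-check. If it fails, I would instead use the opposite multiplication $p\cdot q=P_{h(q)}(p)$, which is equally associative and makes $R_a=P_{h(a)}$.

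\textbf{Step 3 (the fundamental group and $(R_a)_*$), the main obstacle.} We have $\pi_1(T)\cong\varinjlim(F\xrightarrow{\varphi}F\xrightarrow{\varphi}\cdots)$, and $P_1$ induces the map of the direct system shifting by one, which on the colimit is the map induced by $\varphi$. Since $\varphi$ is injective, the colimit is the ascending union. I need that this union is again isomorphic to $F$, with the induced endomorphism again the shift $x_i\mapsto x_{i+1}$ up to conjugacy. My intended argument is to identify the colimit with $F$ via the infinite presentation, since adjoining backward shifts of generators is absorbed by reindexing. Failing that, I would replace $X$ by the Freyd--Heller universal space directly, so that $\pi_1(T)$ is the universal conjugacy-idempotent group, which is again $F$.

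Finally, $\Lambda(S_\infty,a)=\{c_g\circ\varphi\}$ because change-of-basepoint maps differ by inner automorphisms. Freyd--Heller's theorem, that $\varphi$ is not conjugate to an idempotent, then gives $E(\Lambda(S_\infty))=\varnothing$.
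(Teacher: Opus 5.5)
\textbf{The construction fails.} The flow is homotopic to the identity via $(q,s)\mapsto P_{sr}(q)$, $s\in[0,1]$, and this defeats the construction already in Step~2.

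With $p\cdot q=P_{h(p)}(q)$, the right translation $R_a(q)=P_{h(q)}(a)$ factors as $q\mapsto h(q)\mapsto P_{h(q)}(a)$ through the contractible space $[0,\infty)$. So $(R_a)_*$ is trivial, and $\Lambda(S_\infty,a)$ is the singleton containing the zero endomorphism, which is idempotent.

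With your fallback $p\cdot q=P_{h(q)}(p)$, you get $R_a=P_{h(a)}\simeq\id_T$. By \eqref{eq:standard-homotopy}, every $\theta_\alpha$ is then an inner automorphism, so $\Lambda(S_\infty,a)=\operatorname{Inn}(\pi_1(T,a))$ contains $\id$. In both cases $E(\Lambda(S_\infty))\ne\varnothing$, and Theorem~\ref{thm:main} produces an associative lift.

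The same observation refutes your claim that $P_1$ induces $\varphi$ on the colimit. On $\varinjlim(F,\varphi)$ it sends $[g]_n$ to $[\varphi(g)]_{n+1}=[g]_n$, so it is the identity.

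\textbf{Step~3 fails independently.} The map $\varphi$ is injective but not surjective, since $x_0\notin\langle x_1,x_2,\ldots\rangle$. Hence the colimit is a strictly increasing union of copies of $F$. It is therefore not finitely generated, so it is not isomorphic to the $2$-generated group $F$. In your reindexing, the generators $y_k$ ($k\in\ZZ$) satisfy $y_a^{-1}y_by_a=y_{b+1}$ for all $a<b$ in $\ZZ$. Because $\ZZ$ has no least element, this is not the presentation of $F$. Replacing $X$, for example by a Freyd--Heller universal space, does not help: any multiplication built from a $[0,\infty)$-flow in this way has translations that are null-homotopic or homotopic to the identity.

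\textbf{What is missing.} Your overall strategy is sound in principle: realize a conjugacy idempotent that is not conjugate to an idempotent as some $\theta_\alpha$, then apply Lemma~\ref{lem:freyd-heller-normalization}. But it needs a strictly associative multiplication whose translations induce a genuinely non-surjective injective endomorphism of $F$.

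The paper gets this from the tensor product on $B\mathcal C$ for the Fiore--Leinster groupoid. There $R_a$ and $L_a$ induce $f\mapsto f\otimes\id_A$ and $f\mapsto\id_A\otimes f$, and both are injective. Instead of Freyd--Heller universality, the paper then uses Proposition~\ref{app:prop:obstruction}. An associative lift would make $\lambda_\mu$ and $\rho_\mu$ injective idempotents, hence both equal to $\id_G$. Since they have commuting images, $G$ would be abelian, a contradiction. Finally, Theorem~\ref{thm:main} converts this to $E(\Lambda(S_\infty))=\varnothing$.
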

\endgroup
\addtocounter{theorem}{-1}

\begin{remark}[The compact case]\label{rem:compact}
Every nonempty compact Hausdorff topological semigroup has an
idempotent by the Ellis--Numakura lemma \cite{Numakura,Ellis}.
Consequently, if such a semigroup also meets the covering-space
hypotheses, Kahn's theorem \cite[Section~2, Theorem~1]{Kahn}
provides an associative lift. A finite CW complex is compact,
but a finite-dimensional CW complex need not be, so this does
not subsume Corollary~\ref{cor:CW}.
\end{remark}

\begin{remark}
The example in Theorem~\ref{thm:counterexample} has no
idempotent points in $S_\infty$: otherwise Kahn's theorem would
produce an associative lift.
Moreover, Corollary~\ref{cor:thompson-free} (see the necessary
condition~\eqref{eq:freyd-heller-obstruction} in
Section~\ref{sec:existence-criteria}) shows that any such
counterexample must have a fundamental group containing a
subgroup isomorphic to Thompson's group $F$. Containing $F$ is,
however, not sufficient to preclude associative lifts: the
finite two-dimensional CW complex $X$ with $\pi_1(X)\cong F$,
endowed with the left-zero multiplication $xy=x$, consists of
idempotents and hence admits an associative lift.
Theorem~\ref{thm:counterexample} realizes the group-theoretic
obstruction with $\pi_1(S_\infty)\cong F$ itself.
\end{remark}

Our approach is based on the semigroup $\mathcal L_m(p)$ of deck
endomorphisms associated with continuous multiplication lifts.
We show that $\mathcal L_m(p)$ is a nonempty subsemigroup of
$\operatorname{End}(G)$ and that its idempotents are precisely
the endomorphisms induced by associative lifts. This is proved
by analyzing and normalizing the associativity defect of a lift.
We then identify $\mathcal L_m(p)$ with the intrinsically defined
fundamental semigroup $\Lambda(S)$, thereby obtaining the
criterion in Theorem~\ref{thm:main}. The algebraic normalization
is related to the theory of idempotents modulo inner automorphisms
of Freyd--Heller \cite{FreydHeller1993}. Their results also explain
the role of Thompson's group $F$: an obstruction to finding a
strictly idempotent representative in the relevant inner orbit
forces a subgroup isomorphic to $F$. The finite-dimensional CW
result uses the homotopy-splitting theorem of Hastings--Heller
\cite{HastingsHeller}, whereas the infinite-dimensional
counterexample is constructed using the free strict monoidal
groupoid of Fiore--Leinster \cite{FioreLeinster}.

The paper is organized as follows.
Section~\ref{sec:fundamental-semigroup} gives the detailed
construction of $\Lambda(S)$, proves basepoint independence and
invariance under multiplicative homotopy
equivalences (hence under topological semigroup isomorphisms),
and establishes a basic property of its idempotents.
We also characterize when $\Lambda(S)$ is a group
in terms of normality of the image of a right translation on
fundamental groups.
Section~\ref{sec:defect} analyzes the associativity defect
and its normalization, and proves that $\mathcal L_m(p)$ is a
semigroup whose idempotents are precisely those induced by
associative lifts; it also distinguishes these endomorphisms
from the lifts inducing them and provides a non-associative example.
Section~\ref{sec:proof-A} identifies $\Lambda(S)$ with
$\mathcal L_m(p)$ and deduces Theorem~\ref{thm:main}, including
the uniqueness statement for associative lifts.
Section~\ref{sec:existence-criteria} proves the three
existence corollaries, and Section~\ref{sec:proof-B} constructs
the counterexample of Theorem~\ref{thm:counterexample}.

\section{The Fundamental Semigroup}
\label{sec:fundamental-semigroup}

We develop the fundamental semigroup introduced in the
Introduction and prove its basic properties. The arguments require
only path-connectedness, not the existence of a universal covering.

Throughout, homotopies need not preserve basepoints. By a
\emph{topological semigroup} we mean a nonempty Hausdorff space $S$
with a jointly continuous associative multiplication
$m:S\times S\to S$, written $xy=m(x,y)$. We do not assume an
identity, commutativity, or idempotent elements. We write
$E(T)=\{t\in T:t^2=t\}$ for the idempotents of a semigroup $T$.

\begin{definition}[Fundamental semigroup]\label{def:fundamental-semigroup}
Let $S$ be a nonempty path-connected topological semigroup.
Fix $a\in S$ and let $R_a:S\to S$ denote right
translation by $a$, namely
$R_a(x)=xa$. For every path $\alpha:a\to a^2$, let
\[
 C_\alpha:\pi_1(S,a^2)\longrightarrow\pi_1(S,a),\qquad
 C_\alpha([\ell])=[\alpha*\ell*\overline\alpha]
\]
be the change-of-basepoint isomorphism (paths are concatenated
from left to right), and put
\[
 \theta_\alpha:=C_\alpha\circ (R_a)_*
 \in\operatorname{End}(\pi_1(S,a)).
\]
With $S$ fixed, the initial point of $\alpha$ determines $a$ and
hence $R_a$. We therefore write $\theta_\alpha$ for these
right-translation endomorphisms throughout.
Define the \emph{fundamental semigroup} of $S$ \emph{based at $a$} by
\begin{equation}\label{eq:lambda-definition}
 \Lambda(S,a):=
 \bigl\{\theta_\alpha:\alpha\text{ is a path from }a
 \text{ to }a^2\bigr\}
 \subseteq\operatorname{End}(\pi_1(S,a)),
\end{equation}
with multiplication given by composition of endomorphisms.
Closure under composition is verified below.
\end{definition}

Fix a reference path $\alpha_0:a\to a^2$. For any connecting path
$\alpha:a\to a^2$, the change-of-path formula is
\[
 \theta_\alpha
 =\Ad_{[\alpha*\overline{\alpha_0}]}\circ \theta_{\alpha_0},
\]
where $\Ad_g(h)=ghg^{-1}$. Conversely, every inner translate of
$\theta_{\alpha_0}$ is obtained by prepending a loop at $a$ to
$\alpha_0$. Therefore
\begin{equation}\label{eq:lambda-inner-orbit}
 \Lambda(S,a)=
 \bigl\{\Ad_g\circ \theta_{\alpha_0}:
 g\in\pi_1(S,a)\bigr\}.
\end{equation}
The reference path $\alpha_0$ is fixed for this description, and the
resulting orbit is independent of its choice. Equivalently, any
element of $\Lambda(S,a)$ may be used as the orbit representative.
This description will also be used in the proofs of
Corollaries~\ref{cor:finite-inner}, \ref{cor:thompson-free}
and~\ref{cor:CW}.

\medskip
\noindent\textbf{Standard change-of-basepoint identities.}
Both formulas are classical and are stated without proof. The
change-of-basepoint convention used throughout agrees with
\cite[Section~1.1, Proposition~1.5]{Hatcher}.
If $f:X\to Y$ is continuous and $\alpha:x_0\to x_1$ is a path in
$X$, naturality of change of basepoint gives
\begin{equation}\label{eq:standard-naturality}
 f_{*,x_0}\circ C_\alpha
 =C_{f\circ \alpha}\circ f_{*,x_1}
\end{equation}
(see \cite[Section~1.1, Exercise~15]{Hatcher}).
If $f_0,f_1:X\to Y$ are homotopic by
$H:X\times[0,1]\to Y$, and $h(t)=H(x_0,t)$ is the path
traced by the basepoint, then
\begin{equation}\label{eq:standard-homotopy}
 (f_0)_{*,x_0}=C_h\circ (f_1)_{*,x_0}
\end{equation}
(see \cite[Section~1.1, Lemma~1.19]{Hatcher}), where
$C_h:\pi_1(Y,f_1(x_0))\to\pi_1(Y,f_0(x_0))$.
The subscripts on induced homomorphisms specify their source
basepoints and will be omitted when these are clear.
\medskip

\begin{proof}[Closure under composition]
Let $\alpha,\beta:a\to a^2$ be paths and write
$L_a(x)=ax$. Applying~\eqref{eq:standard-naturality} gives
\[
 \theta_\alpha\circ \theta_\beta
 =C_{\alpha*(R_a\circ \beta)}\circ (R_a^2)_*.
\]
The homotopy $H(x,t)=x\beta(t)$ runs from $R_a$ to
$R_a^2=R_{a^2}$, and its path at the basepoint $a$ is
$L_a\circ \beta:a^2\to a^3$. The homotopy
identity~\eqref{eq:standard-homotopy} therefore implies
\[
 C_{L_a\circ \beta}\circ (R_a^2)_*=(R_a)_*.
\]
Consequently,
\begin{equation}\label{eq:lambda-composition-path}
 \begin{aligned}
 \theta_\alpha\circ \theta_\beta
 &=C_{\alpha*(R_a\circ \beta)*
       \overline{(L_a\circ \beta)}}\circ (R_a)_*\\
 &=\theta_\gamma,
 \end{aligned}
\end{equation}
where
\[
 \gamma:=\alpha*(R_a\circ \beta)*
          \overline{(L_a\circ \beta)}:a\to a^2.
\]
Thus $\Lambda(S,a)$ is closed under composition. It is nonempty
because $S$ is path-connected, and associativity is inherited from
$\operatorname{End}(\pi_1(S,a))$.
\end{proof}

\begin{lemma}[Basepoint independence]\label{lem:basepoint-independence}
For any $a,b\in S$ and any path $\eta:a\to b$, the
change-of-basepoint isomorphism
$C_\eta:\pi_1(S,b)\xrightarrow{\cong}\pi_1(S,a)$ satisfies
\begin{equation}\label{eq:lambda-basepoint-independence}
 C_\eta\circ \Lambda(S,b)\circ C_\eta^{-1}=\Lambda(S,a).
\end{equation}
In particular, $\Lambda(S,a)\cong\Lambda(S,b)$ as semigroups.
\end{lemma}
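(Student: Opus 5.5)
The plan is to show that conjugation by $C_\eta$ carries each generator $\theta_\beta\in\Lambda(S,b)$, for $\beta:b\to b^2$, to some $\theta_\alpha\in\Lambda(S,a)$ with an explicit path $\alpha:a\to a^2$. This gives one inclusion. The reverse inclusion then follows by applying the same statement to $\overline\eta:b\to a$, since $C_{\overline\eta}=C_\eta^{-1}$. Because conjugation by an isomorphism respects composition, the equality \eqref{eq:lambda-basepoint-independence} immediately gives the semigroup isomorphism $\theta\mapsto C_\eta\circ\theta\circ C_\eta^{-1}$.

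For the key computation, fix $\beta:b\to b^2$. We have $C_\eta\circ\theta_\beta\circ C_\eta^{-1}=C_\eta\circ C_\beta\circ(R_b)_{*,b}\circ C_{\overline\eta}$. By naturality \eqref{eq:standard-naturality} applied to $f=R_b$ and the path $\overline\eta:b\to a$, we get $(R_b)_{*,b}\circ C_{\overline\eta}=C_{R_b\circ\overline\eta}\circ(R_b)_{*,a}$, where $R_b\circ\overline\eta$ runs from $b^2$ to $ab$. Next we compare $R_b$ with $R_a$ at the basepoint $a$. The homotopy $H(x,t)=x\,\eta(t)$ runs from $R_a$ to $R_b$, and it is continuous by joint continuity of multiplication. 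Its basepoint track is $L_a\circ\eta:a^2\to ab$. By \eqref{eq:standard-homotopy}, $(R_a)_{*,a}=C_{L_a\circ\eta}\circ(R_b)_{*,a}$, so $(R_b)_{*,a}=C_{\overline{L_a\circ\eta}}\circ(R_a)_{*,a}$.

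Assembling these, and using $C_\sigma\circ C_\tau=C_{\sigma*\tau}$, we obtain
\[
 C_\eta\circ\theta_\beta\circ C_\eta^{-1}=C_{\alpha}\circ(R_a)_*,\qquad
 \alpha:=\eta*\beta*(R_b\circ\overline\eta)*\overline{(L_a\circ\eta)} .
\]
This path goes from $a$ to $b$, then $b^2$, then $ab$, then $a^2$. So $\alpha$ is a path from $a$ to $a^2$, and the right-hand side is $\theta_\alpha\in\Lambda(S,a)$. That establishes $C_\eta\Lambda(S,b)C_\eta^{-1}\subseteq\Lambda(S,a)$. Running the same argument with $\overline\eta$ gives $C_\eta^{-1}\Lambda(S,a)C_\eta\subseteq\Lambda(S,b)$, which yields equality.

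The only delicate point is the direction conventions. One must check that the homotopy runs from $R_a$ to $R_b$ with track $L_a\circ\eta$, and not the reverse, and that the composition rule for $C$ matches left-to-right concatenation. I would verify both against \eqref{eq:standard-homotopy} exactly as it is used in the closure proof above, where the same pattern occurs with $\beta$ in place of $\eta$. If an orientation turns out to be reversed, the formula for $\alpha$ changes only by inverting one segment, and it still yields a path $a\to a^2$.
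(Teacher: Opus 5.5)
Your proposal is correct and follows the paper's proof essentially step for step. The paper uses the same homotopy $H(x,t)=x\eta(t)$ with trace $L_a\circ\eta$, the same naturality step, the same connecting path $\alpha=\eta*\beta*(R_b\circ\overline\eta)*\overline{(L_a\circ\eta)}$, and the same reverse inclusion via $\overline\eta$; your orientations are consistent with \eqref{eq:standard-naturality} and \eqref{eq:standard-homotopy}, so the hedge in your last paragraph is unnecessary.
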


\begin{proof}
Let $\beta:b\to b^2$ be any path, and put
$L_a(x)=ax$. Define the paths
\[
 \delta:=\eta*\beta*(R_b\circ \overline\eta):a\to ab,
 \qquad h:=L_a\circ \eta:a^2\to ab,
\]
and $\alpha:=\delta*\overline h:a\to a^2$.
For a loop $u$ at $a$, the definitions and the
naturality identity~\eqref{eq:standard-naturality} give
\[
 (C_\eta\circ \theta_\beta\circ C_\eta^{-1})([u])
 = [\delta*(R_b\circ u)*\overline\delta].
\]
The homotopy $H(x,t)=x\eta(t)$ runs from $R_a$ to $R_b$.
Its trace at the basepoint $a$ is $h$, so the homotopy
identity~\eqref{eq:standard-homotopy} yields
\[
 [R_b\circ u]=[\overline h*(R_a\circ u)*h]
 \quad\text{in }\pi_1(S,ab).
\]
Consequently,
\[
 \begin{aligned}
 (C_\eta\circ \theta_\beta\circ C_\eta^{-1})([u])
 &= [\alpha*(R_a\circ u)*\overline\alpha]\\
 &=\theta_\alpha([u]).
 \end{aligned}
\]
Thus conjugation by $C_\eta$ maps $\Lambda(S,b)$ into
$\Lambda(S,a)$. Applying the same argument to the reversed path
$\overline\eta:b\to a$ gives the reverse inclusion, since
$C_{\overline\eta}=C_\eta^{-1}$. Hence
\eqref{eq:lambda-basepoint-independence} holds, and conjugation
by $C_\eta$ is a semigroup isomorphism.
\end{proof}

By Lemma~\ref{lem:basepoint-independence}, we write $\Lambda(S)$
for the fundamental semigroup considered up to isomorphism,
and retain $\Lambda(S,a)$ when working with its realization in
$\operatorname{End}(\pi_1(S,a))$.

\begin{theorem}[Invariance under multiplicative homotopy equivalences]
\label{thm:fundamental-multiplicative-homotopy-invariance}
Let $(S,m_S)$ and $(T,m_T)$ be nonempty path-connected topological
semigroups. Suppose that $f:S\to T$ is a homotopy equivalence of
the underlying spaces and that
\[
 f\circ m_S\simeq m_T\circ (f\times f)
\]
as continuous maps $S\times S\to T$. Then
\[
 \Lambda(S)\cong\Lambda(T).
\]
More precisely, fix a homotopy
$H:S\times S\times[0,1]\to T$ satisfying
\[
 H(x,y,0)=f(xy),\qquad H(x,y,1)=f(x)f(y).
\]
For $a\in S$, put $b=f(a)$ and let
$h(t)=H(a,a,t)$, a path from $f(a^2)$ to $b^2$.
Then for every path $\alpha:a\to a^2$,
\begin{equation}\label{eq:lambda-multiplicative-homotopy-formula}
 f_{*,a}\circ \theta^S_\alpha\circ f_{*,a}^{-1}
   =\theta^T_{(f\circ \alpha)*h},
\end{equation}
where the superscripts indicate the ambient semigroup. In
particular,
\begin{equation}\label{eq:lambda-multiplicative-homotopy-invariance}
 f_{*,a}\circ \Lambda(S,a)\circ f_{*,a}^{-1}
     =\Lambda(T,b).
\end{equation}
\end{theorem}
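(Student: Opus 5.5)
The plan is to prove the pointwise formula \eqref{eq:lambda-multiplicative-homotopy-formula} first. Then \eqref{eq:lambda-multiplicative-homotopy-invariance} follows from the inner-orbit description \eqref{eq:lambda-inner-orbit} together with the fact that $f_{*,a}$ is an isomorphism. Since $f$ is a homotopy equivalence, $f_{*,a}:\pi_1(S,a)\to\pi_1(T,b)$ is an isomorphism; this is standard, even with free homotopies, by \eqref{eq:standard-homotopy}. So conjugation by $f_{*,a}$ is a semigroup isomorphism $\operatorname{End}(\pi_1(S,a))\to\operatorname{End}(\pi_1(T,b))$. Basepoint independence (Lemma~\ref{lem:basepoint-independence}) then yields $\Lambda(S)\cong\Lambda(T)$.

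For the formula, I fix $\alpha:a\to a^2$ and compute $f_*\circ\theta^S_\alpha=f_*\circ C_\alpha\circ(R_a)_*$. By naturality \eqref{eq:standard-naturality} this equals $C_{f\circ\alpha}\circ(f\circ R_a)_{*,a}$, with $C_{f\circ\alpha}:\pi_1(T,f(a^2))\to\pi_1(T,b)$. Next I restrict $H$ to the slice $y=a$. The map $K(x,t)=H(x,a,t)$ is a homotopy from $f\circ R_a$ to $x\mapsto f(x)b=R_b\circ f$, and its trace at $a$ is exactly $h$. By \eqref{eq:standard-homotopy}, $(f\circ R_a)_{*,a}=C_h\circ(R_b\circ f)_{*,a}=C_h\circ(R_b)_{*}\circ f_{*,a}$. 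Combining, and using $C_{f\circ\alpha}\circ C_h=C_{(f\circ\alpha)*h}$, gives
\[
 f_{*,a}\circ\theta^S_\alpha=C_{(f\circ\alpha)*h}\circ(R_b)_*\circ f_{*,a}=\theta^T_{(f\circ\alpha)*h}\circ f_{*,a}.
\]
Here $(f\circ\alpha)*h$ runs from $b$ to $b^2$, as required. Composing on the right with $f_{*,a}^{-1}$ gives \eqref{eq:lambda-multiplicative-homotopy-formula}.

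For \eqref{eq:lambda-multiplicative-homotopy-invariance}, the formula shows that the conjugate of $\Lambda(S,a)$ is contained in $\Lambda(T,b)$. For the reverse inclusion I use \eqref{eq:lambda-inner-orbit}: $\Lambda(T,b)$ is the orbit $\{\Ad_k\circ\theta^T_{\beta_0}\}$ for any single representative, say $\beta_0=(f\circ\alpha_0)*h$. Since $f_{*,a}$ is surjective, any $k\in\pi_1(T,b)$ is $f_*(g)$ for some $g$. Then $\Ad_{f_*(g)}\circ f_*\theta_{\alpha_0}f_*^{-1}=f_*\circ\Ad_g\circ\theta_{\alpha_0}\circ f_*^{-1}$, and $\Ad_g\circ\theta_{\alpha_0}$ lies in $\Lambda(S,a)$. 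This gives equality.

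The main subtle point is bookkeeping of basepoints and orientation conventions in \eqref{eq:standard-homotopy}: checking that $K$ runs from $f\circ R_a$ (at $t=0$) to $R_b\circ f$ (at $t=1$), so that $C_h$ rather than $C_{\overline h}$ appears, and that concatenation order matches $C_{\alpha}C_\beta=C_{\alpha*\beta}$. Surjectivity of $f_{*,a}$ under a free homotopy inverse is the other input I would verify briefly.
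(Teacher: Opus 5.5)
Your proposal is correct and follows essentially the same route as the paper. You restrict $H$ to the slice $y=a$ to get a homotopy from $f\circ R_a$ to $R_b\circ f$ with trace $h$, combine \eqref{eq:standard-homotopy} with naturality \eqref{eq:standard-naturality}, and then use the inner-orbit description \eqref{eq:lambda-inner-orbit} together with bijectivity of $f_{*,a}$ for the set equality; the only cosmetic difference is that you argue the two inclusions separately, where the paper writes a single chain of equalities.
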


\begin{proof}
The homotopy $K(x,t):=H(x,a,t)$ runs from
$f\circ R_a^S$ to $R_b^T\circ f$. Its trace at $a$
is $h(t)$. The homotopy identity
\eqref{eq:standard-homotopy} therefore gives
\[
 f_{*,a^2}\circ (R_a^S)_*
   = C_h\circ (R_b^T)_*\circ f_{*,a}.
\]
For a path $\alpha:a\to a^2$, naturality
\eqref{eq:standard-naturality} gives
\[
 f_{*,a}\circ C_\alpha^S
   =C_{f\circ \alpha}^T\circ f_{*,a^2}.
\]
Combining the two identities, and using
$C_{f\circ \alpha}^T\circ C_h=C_{(f\circ \alpha)*h}^T$,
we obtain
\[
 \begin{aligned}
 f_{*,a}\circ \theta^S_\alpha\circ f_{*,a}^{-1}
 &=C_{f\circ \alpha}^T\circ C_h\circ (R_b^T)_*\\
 &=\theta^T_{(f\circ \alpha)*h},
 \end{aligned}
\]
proving~\eqref{eq:lambda-multiplicative-homotopy-formula}.

It remains to prove that conjugation by $f_{*,a}$ maps the
entire fundamental semigroup onto $\Lambda(T,b)$.
Choose a reference path $\alpha_0:a\to a^2$ and put
$\rho=(f\circ \alpha_0)*h:b\to b^2$.
By the inner-orbit formula~\eqref{eq:lambda-inner-orbit},
\[
 \Lambda(S,a)
 =\{\Ad_g\circ \theta^S_{\alpha_0}:
                        g\in\pi_1(S,a)\}.
\]
Since $f$ is a homotopy equivalence,
$f_{*,a}:\pi_1(S,a)\to\pi_1(T,b)$ is an isomorphism.
Thus~\eqref{eq:lambda-multiplicative-homotopy-formula}
implies
\[
 \begin{aligned}
 f_{*,a}\circ \Lambda(S,a)\circ f_{*,a}^{-1}
 &=\{\Ad_{f_{*,a}(g)}\circ \theta^T_\rho:
                              g\in\pi_1(S,a)\}\\
 &=\{\Ad_k\circ \theta^T_\rho:
                              k\in\pi_1(T,b)\}\\
 &=\Lambda(T,b),
 \end{aligned}
\]
where the last equality again uses
\eqref{eq:lambda-inner-orbit}. Conjugation by the group
isomorphism $f_{*,a}$ preserves composition and therefore gives
the asserted semigroup isomorphism.
\end{proof}

\begin{corollary}[Invariance under semigroup isomorphisms]
\label{prop:fundamental-isomorphism-invariance}
If $f:S\to T$ is an isomorphism of nonempty path-connected
topological semigroups, then
$\Lambda(S)\cong\Lambda(T)$. More precisely, for every $a\in S$,
the induced isomorphism $f_{*,a}$ conjugates $\Lambda(S,a)$ onto
$\Lambda(T,f(a))$.
\end{corollary}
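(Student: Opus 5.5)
This corollary is a direct specialization of Theorem~\ref{thm:fundamental-multiplicative-homotopy-invariance}, so I will check that a topological semigroup isomorphism satisfies its hypotheses with the trivial homotopy.

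Let $f:S\to T$ be an isomorphism of topological semigroups. It is a homeomorphism, so as a map of underlying spaces it is a homotopy equivalence, with $f^{-1}$ as homotopy inverse. Because $f$ is multiplicative, we have the strict identity $f\circ m_S=m_T\circ(f\times f)$. Hence I will take the constant homotopy
\[
 H(x,y,t)=f(xy)=f(x)f(y).
\]
It satisfies $H(x,y,0)=f(xy)$ and $H(x,y,1)=f(x)f(y)$, as Theorem~\ref{thm:fundamental-multiplicative-homotopy-invariance} requires.

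For $a\in S$ put $b=f(a)$. The trace $h(t)=H(a,a,t)$ is then the constant path at $f(a^2)=b^2$. Formula~\eqref{eq:lambda-multiplicative-homotopy-invariance} gives directly
\[
 f_{*,a}\circ\Lambda(S,a)\circ f_{*,a}^{-1}=\Lambda(T,f(a)).
\]
This is the \emph{more precisely} clause of the corollary. Since conjugation by the group isomorphism $f_{*,a}$ preserves composition, it is a semigroup isomorphism, and therefore $\Lambda(S)\cong\Lambda(T)$.

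As a sanity check, formula~\eqref{eq:lambda-multiplicative-homotopy-formula} reduces in this case to
\[
 f_{*,a}\circ\theta^S_\alpha\circ f_{*,a}^{-1}=\theta^T_{f\circ\alpha},
\]
because concatenating with a constant path does not change the path-homotopy class. There is no real obstacle here. The only points to verify are that a homeomorphism induces an isomorphism on $\pi_1$, and that the constant homotopy is admissible under the theorem's standing convention that homotopies need not preserve basepoints.
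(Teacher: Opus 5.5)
Your proof is correct and follows the paper's argument exactly. A topological semigroup isomorphism is a homotopy equivalence satisfying $f\circ m_S=m_T\circ(f\times f)$ strictly, so Theorem~\ref{thm:fundamental-multiplicative-homotopy-invariance} applies with the constant homotopy. Its basepoint trace is constant, which reduces the conjugation formula to $f_{*,a}\circ\theta^S_\alpha\circ f_{*,a}^{-1}=\theta^T_{f\circ\alpha}$.
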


\begin{proof}
A topological semigroup isomorphism is a homotopy equivalence
and satisfies $f\circ m_S=m_T\circ (f\times f)$.
Apply Theorem~\ref{thm:fundamental-multiplicative-homotopy-invariance}
with the constant homotopy; its basepoint trace $h$ is constant,
and~\eqref{eq:lambda-multiplicative-homotopy-formula} reduces to
\[
 f_{*,a}\circ \theta^S_\alpha\circ f_{*,a}^{-1}
 =\theta^T_{f\circ \alpha}.
\]
\end{proof}

\begin{corollary}[Homotopic associative multiplications]
\label{cor:fundamental-homotopic-multiplications}
Let $X$ be a nonempty path-connected Hausdorff space, and let
$m_0,m_1:X\times X\to X$ be continuous associative
multiplications. If $m_0\simeq m_1$ as maps $X\times X\to X$,
then for every $a\in X$,
\[
 \Lambda((X,m_0),a)=\Lambda((X,m_1),a)
 \quad\text{inside }\operatorname{End}(\pi_1(X,a)).
\]
\end{corollary}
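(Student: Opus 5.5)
The plan is to apply Theorem~\ref{thm:fundamental-multiplicative-homotopy-invariance} with $S=(X,m_0)$, $T=(X,m_1)$ and $f=\id_X$. The identity is a homotopy equivalence. The hypothesis $f\circ m_0\simeq m_1\circ(f\times f)$ reduces to the assumed homotopy $m_0\simeq m_1$. So fix a homotopy $H:X\times X\times[0,1]\to X$ with $H(x,y,0)=m_0(x,y)$ and $H(x,y,1)=m_1(x,y)$. Both products are being compared at the same point $a$, so the basepoint is $b=f(a)=a$. Here $f_{*,a}$ is the identity of $\pi_1(X,a)$, so no conjugation appears.

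Formula~\eqref{eq:lambda-multiplicative-homotopy-invariance} then reads
\[
 \Lambda((X,m_0),a)=\id\circ\Lambda((X,m_0),a)\circ\id=\Lambda((X,m_1),a)
\]
as subsets of $\operatorname{End}(\pi_1(X,a))$. This is exactly the asserted equality, not merely an abstract isomorphism. Explicitly, \eqref{eq:lambda-multiplicative-homotopy-formula} says that $\theta^{m_0}_\alpha=\theta^{m_1}_{\alpha*h}$ for every path $\alpha$ from $a$ to $m_0(a,a)$. Here $h(t)=H(a,a,t)$ runs from $m_0(a,a)$ to $m_1(a,a)$, so $\alpha*h$ is a legitimate connecting path for $m_1$.

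The only point needing care is the bookkeeping. The symbol $a^2$ means different points for the two multiplications, namely $m_0(a,a)$ and $m_1(a,a)$, and the theorem's proof already routes between them via $h$. The surjectivity part of the theorem, which uses the inner-orbit description~\eqref{eq:lambda-inner-orbit} with $f_{*,a}=\id$, gives equality rather than just inclusion. I expect no real obstacle. One could also note symmetry: reversing $H$ gives the reverse inclusion directly. However, the theorem's equality statement already covers this.
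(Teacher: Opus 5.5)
Your proposal is correct and is the paper's argument: apply the multiplicative homotopy-invariance theorem to $f=\id_X\colon (X,m_0)\to(X,m_1)$, note that $f_{*,a}$ is the identity of $\pi_1(X,a)$, and read off literal equality rather than an abstract isomorphism. Your explicit identity $\theta^{m_0}_\alpha=\theta^{m_1}_{\alpha*h}$, where $h$ runs from $m_0(a,a)$ to $m_1(a,a)$, is exactly how the theorem's path formula specializes in this case.
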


\begin{proof}
Apply Theorem~\ref{thm:fundamental-multiplicative-homotopy-invariance}
to the identity map $f=\id_X:(X,m_0)\to(X,m_1)$.
Its induced map on $\pi_1(X,a)$ is the identity, so
\eqref{eq:lambda-multiplicative-homotopy-invariance}
gives equality, rather than merely an abstract isomorphism.
\end{proof}

\begin{remark}[Not an invariant of the underlying homotopy type]
\label{rem:fundamental-not-ordinary-homotopy-invariant}
The multiplicative compatibility hypothesis is essential:
$\Lambda(S)$ is not, in general, determined by the homotopy
type of the underlying space. Indeed, on the bouquet of two
circles, the multiplication in
Example~\ref{ex:commutative-infinite-idempotents} gives a
countably infinite fundamental semigroup. On the same underlying
space, the right-zero multiplication $x*y=y$ makes every right
translation constant and hence produces a singleton fundamental
semigroup. These two semigroups have identical underlying spaces
but nonisomorphic fundamental semigroups.
\end{remark}

The idempotents of the fundamental semigroup satisfy a simple
structural property.

\begin{proposition}[Idempotents are right identities]
\label{prop:fundamental-idempotents-right-identities}
If $S$ is a nonempty path-connected topological semigroup and
$E(\Lambda(S))\ne\varnothing$, then every idempotent of
$\Lambda(S)$ is a right identity for $\Lambda(S)$. In particular,
the idempotents form a left-zero semigroup:
\[
 xy=x\qquad(x,y\in E(\Lambda(S))).
\]
\end{proposition}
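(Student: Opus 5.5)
We would work inside a fixed realization $\Lambda(S,a)\subseteq\operatorname{End}(\pi_1(S,a))$; by Lemma~\ref{lem:basepoint-independence} this loses nothing. The key tool is the inner-orbit description~\eqref{eq:lambda-inner-orbit}. Fix an idempotent $e\in E(\Lambda(S,a))$ and use it as the orbit representative. Then every element of $\Lambda(S,a)$ has the form $x=\Ad_g\circ e$ for some $g\in\pi_1(S,a)$.

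The central computation is then
\[
 x\circ e=\Ad_g\circ e\circ e=\Ad_g\circ e=x .
\]
This holds simply because $e\circ e=e$ and composition is associative. So $e$ is a right identity for $\Lambda(S,a)$, which is the main assertion. Transporting along the isomorphism of Lemma~\ref{lem:basepoint-independence} shows that the statement is independent of the realization, so it holds for the abstract semigroup $\Lambda(S)$.

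For the second claim, take idempotents $x,y\in E(\Lambda(S))$. Since $y$ is itself an idempotent, the first part applied to $y$ shows that $y$ is a right identity. Hence $xy=x$. This is exactly the left-zero law on $E(\Lambda(S))$. Closure is automatic here, because $xy=x$ is again an idempotent.

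The main step to check carefully is the legitimacy of choosing the idempotent $e$ as the orbit representative in~\eqref{eq:lambda-inner-orbit}. The paper states explicitly that any element of $\Lambda(S,a)$ may serve as the representative. If one wants to verify this directly, write $e=\theta_\beta$ for some path $\beta:a\to a^2$. The change-of-path formula gives $\theta_\alpha=\Ad_{[\alpha*\overline\beta]}\circ\theta_\beta$ for every path $\alpha:a\to a^2$. This is precisely the representation needed, and the remainder of the argument is purely algebraic.
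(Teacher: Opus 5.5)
Your proof is correct, and it is shorter than the paper's.

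The paper fixes one idempotent $e$ as the orbit representative. It uses the homomorphism identity $e\circ\Ad_h=\Ad_{e(h)}\circ e$ to derive the product rule $e_g\circ e_h=e_{ge(h)}$ in \eqref{eq:fundamental-idempotent-product}. It then shows that idempotence of another element $e_h$ forces $\Ad_{e(h)}\circ e=e$, by cancelling $\Ad_h$, and deduces $e_g\circ e_h=e_g$.

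You instead re-center the inner orbit at whichever idempotent is under consideration. The right-identity property then reduces to $e\circ e=e$ and associativity of composition, with no homomorphism identity or cancellation needed. Your route makes clear that the claim uses only one fact: $\Lambda(S,a)$ is a single orbit $\operatorname{Inn}(\pi_1(S,a))\circ\theta_\alpha$, and every element generates that same orbit because $\operatorname{Inn}(\pi_1(S,a))$ is a group.

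The paper's longer route gives extra structure. It produces the explicit multiplication formula, which is reused in Example~\ref{ex:commutative-infinite-idempotents}. It also yields a test for which orbit elements are idempotent: $e_h$ is idempotent exactly when $e(h)$ centralizes $\operatorname{Im}(e)$.
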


\begin{proof}
If $E(\Lambda(S,a))$ is nonempty, every idempotent is a right identity
for all of $\Lambda(S,a)$. Indeed, choose an idempotent
$e\in\Lambda(S,a)$. By the inner-orbit
formula \eqref{eq:lambda-inner-orbit},
\[
 \Lambda(S,a)=\{e_g:=\Ad_g\circ e:g\in\pi_1(S,a)\}.
\]
Since $e$ is a homomorphism, for every $h\in\pi_1(S,a)$ we have
\[
 e\circ \Ad_h=\Ad_{e(h)}\circ e.
\]
Using $e^2=e$, we therefore obtain, for any $g,h\in\pi_1(S,a)$,
\begin{equation}\label{eq:fundamental-idempotent-product}
 \begin{aligned}
 e_g\circ e_h
 &=\Ad_g\circ e\circ \Ad_h\circ e\\
 &=\Ad_g\circ \Ad_{e(h)}\circ e^2\\
 &=\Ad_{g e(h)}\circ e
 =e_{g e(h)}.
 \end{aligned}
\end{equation}
If $e_h$ is idempotent, then $e_{h e(h)}=e_h$.
Writing this equality in terms of inner automorphisms gives
\[
 \Ad_h\circ \Ad_{e(h)}\circ e
 =\Ad_{h e(h)}\circ e
 =\Ad_h\circ e.
\]
Since $\Ad_h$ is an automorphism, composing both sides on the
left with $\Ad_{h^{-1}}$ yields
\[
 \Ad_{e(h)}\circ e=e.
\]
Consequently, for every $g\in\pi_1(S,a)$,
\[
 \begin{aligned}
 e_g\circ e_h
 &=e_{g e(h)}\\
 &=\Ad_g\circ \bigl(\Ad_{e(h)}\circ e\bigr)\\
 &=\Ad_g\circ e=e_g.
 \end{aligned}
\]
In particular, the idempotents form a \emph{left-zero semigroup}:
\[
 xy=x\qquad(x,y\in E(\Lambda(S,a))).
\]
\end{proof}

\begin{theorem}[When the fundamental semigroup is a group]
\label{thm:fundamental-semigroup-group-criterion}
Let $S$ be a nonempty path-connected topological semigroup and fix
$a\in S$. For any path $\alpha:a\to a^2$, the following conditions
are equivalent:
\begin{enumerate}[label=(\roman*),leftmargin=2.5em]
 \item $\Lambda(S,a)$ is a group under composition;
 \item $\operatorname{Im}(\theta_\alpha)$ is a normal subgroup of
 $\pi_1(S,a)$;
 \item $\operatorname{Im}(R_a)_*$ is a normal subgroup of
 $\pi_1(S,a^2)$.
\end{enumerate}
In particular,
\begin{equation}\label{eq:fundamental-group-normality-criterion}
 \Lambda(S)\text{ is a group}
 \quad\Longleftrightarrow\quad
 \operatorname{Im}(R_a)_*\trianglelefteq\pi_1(S,a^2).
\end{equation}
If these conditions hold, $\Lambda(S,a)$ has a unique idempotent
$e$, which is its identity, and
\[
 \operatorname{Im}(e)=\operatorname{Im}(\theta_\alpha),
 \qquad
 \Lambda(S,a)\cong
 \operatorname{Inn}\bigl(\operatorname{Im}(\theta_\alpha)\bigr).
\]
In particular, this criterion is independent of the path $\alpha$
and of the basepoint $a$.
\end{theorem}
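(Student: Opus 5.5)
The plan is to work throughout with the inner-orbit description \eqref{eq:lambda-inner-orbit}, taking $\theta:=\theta_\alpha$ as orbit representative, so that $\Lambda(S,a)=\{\Ad_g\circ\theta:g\in\pi_1(S,a)\}$. Write $N:=\operatorname{Im}(\theta)$.

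\textbf{(ii)$\Leftrightarrow$(iii).} This step is immediate. We have $\operatorname{Im}(\theta_\alpha)=C_\alpha\bigl(\operatorname{Im}(R_a)_*\bigr)$, and the group isomorphism $C_\alpha$ carries normal subgroups to normal subgroups in both directions.

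\textbf{(i)$\Rightarrow$(ii).} Let $e$ be the identity of the group $\Lambda(S,a)$. For any $x\in\Lambda(S,a)$ we have $x=e\circ x$, so $\operatorname{Im}x\subseteq\operatorname{Im}e$. We also have $e=y\circ x$ for $y=x^{-1}$, so $\operatorname{Im}e\subseteq\operatorname{Im}y$. Applying both inclusions to every element shows that all members of $\Lambda(S,a)$ have the same image $\operatorname{Im}e$. Since $\operatorname{Im}(\Ad_g\circ\theta)=gNg^{-1}$, this gives $gNg^{-1}=N$ for all $g$. Hence $N$ is normal, and moreover $\operatorname{Im}(e)=N$.

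\textbf{(ii)$\Rightarrow$(i).} Assume $N\trianglelefteq\pi_1(S,a)$. Then every $x=\Ad_g\circ\theta$ maps $N$ into $N$. Two such maps with the same restriction to $N$ are equal: if $\Ad_g|_N=\Ad_h|_N$, then $\Ad_g\circ\theta=\Ad_h\circ\theta$ because $\theta$ takes values in $N$. Therefore restriction $x\mapsto x|_N$ is an injective semigroup homomorphism $\Lambda(S,a)\to\operatorname{End}(N)$.

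\begin{itemize}
\item Since $\theta^2\in\Lambda(S,a)$, there is $k$ with $\theta^2=\Ad_k\circ\theta$.
\item From this, $\operatorname{Im}\theta^2=kNk^{-1}=N$ and $\ker\theta^2=\ker\theta$. So $\varphi:=\theta|_N$ is an automorphism of $N$: surjectivity is $\theta(N)=N$, and injectivity holds because $\theta(\theta(x))=1$ forces $x\in\ker\theta$, hence $\theta(x)=1$.
\item Restricting $\theta\circ\theta=\Ad_k\circ\theta$ to $N$ and cancelling the surjective $\varphi$ on the right gives $\varphi=\Ad_k|_N$.
\end{itemize}

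Thus the image of restriction is $A:=\{\Ad_g|_N:g\in\pi_1(S,a)\}$, which is a subgroup of $\operatorname{Aut}(N)$ (it is the image of conjugation restricted to the normal subgroup $N$). So $\Lambda(S,a)\cong A$ is a group.

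\textbf{Identity and the isomorphism with $\operatorname{Inn}(N)$.} Once $\Lambda(S,a)$ is a group, its unique idempotent is the identity $e$. Uniqueness also follows from Proposition~\ref{prop:fundamental-idempotents-right-identities}, since idempotents are right identities and a group has only one idempotent. The image computation in the (i)$\Rightarrow$(ii) step gives $\operatorname{Im}(e)=N$. Because $e$ is idempotent with image $N$, it acts as $\id_N$ on $N$.

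The point I expect to need the most care is showing $A=\operatorname{Inn}(N)$, since a priori $\Ad_g|_N$ is only an outer-type automorphism of $N$. I will use $e$ to correct this.

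\begin{itemize}
\item For any $g$ and $n\in N$, we have $e(gng^{-1})=e(g)\,n\,e(g)^{-1}$, so $(e\circ\Ad_g)|_N=\Ad_{e(g)}|_N$ with $e(g)\in N$.
\item Hence for $x=\Ad_g\circ\theta$, the identity $x=e\circ x$ gives $x|_N=\Ad_{e(g)}|_N\circ\varphi$.
\item Similarly $\varphi=\theta|_N=(e\circ\theta)|_N$. Writing $e=\Ad_{g_0}\circ\theta$, we get $\Ad_{g_0}|_N\circ\varphi=\id_N$. Then $\varphi=\Ad_{g_0^{-1}}|_N=(e\circ\Ad_{g_0^{-1}})|_N=\Ad_{e(g_0^{-1})}|_N$, which is inner.
\end{itemize}

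So every restriction lies in $\operatorname{Inn}(N)$. Conversely, every $\Ad_n|_N$ with $n\in N$ is attained, namely by $x=\Ad_n\circ\Ad_{g_0}\circ\theta$. This gives $\Lambda(S,a)\cong\operatorname{Inn}(N)$.

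\textbf{Independence of $\alpha$ and $a$.} Condition (iii) does not involve $\alpha$, so the criterion is independent of the path. For the basepoint, Lemma~\ref{lem:basepoint-independence} shows that $\Lambda(S,a)$ and $\Lambda(S,b)$ are isomorphic semigroups, and being a group is an isomorphism-invariant property.
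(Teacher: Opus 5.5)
Your proof is correct, but it handles (ii)$\Rightarrow$(i) and the $\operatorname{Inn}$ identification differently from the paper.

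The paper first constructs the idempotent explicitly. From $\theta_\alpha^2=\Ad_t\circ\theta_\alpha$ it reads off $\theta_\alpha|_N=\Ad_t|_N$ and sets $e=\Ad_{t^{-1}}\circ\theta_\alpha$. It then uses the internal direct product $\pi_1(S,a)=\operatorname{Im}(e)\times\ker(e)$ to collapse the orbit to $\{\Ad_h\circ e:h\in\operatorname{Im}(e)\}$. There the rule $(\Ad_h\circ e)\circ(\Ad_k\circ e)=\Ad_{hk}\circ e$ gives the identity, the inverses and the isomorphism with $\operatorname{Inn}(\operatorname{Im} e)$ at once.

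You instead embed $\Lambda(S,a)$ into $\operatorname{Aut}(N)$ by restriction. You recognize the image as the image of $g\mapsto\Ad_g|_N$, so in effect $\Lambda(S,a)\cong\pi_1(S,a)/C_{\pi_1(S,a)}(N)$. This gets the group property from normality alone, without constructing $e$. You use the identity $e$ only afterwards, to show each $\Ad_g|_N$ is inner. That step can be shortened: since $e|_N=\id_N$ and $\Ad_g(N)=N$, you have $\Ad_g|_N=(e\circ\Ad_g)|_N=\Ad_{e(g)}|_N$ directly.

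What each route buys:
\begin{itemize}
\item Yours makes the split between group structure and innerness transparent.
\item The paper's also yields the splitting $\pi_1(S,a)\cong\operatorname{Im}(e)\times\ker(e)$, which Corollary~\ref{cor:fundamental-group-direct-product-splitting} quotes from this proof, and an explicit formula for the identity.
\end{itemize}

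Your (i)$\Rightarrow$(ii) also differs mildly. You use inverses to show every element has image $\operatorname{Im}(e)$. The paper uses only that $e$ is an idempotent left identity whose fixed-point set is its image.

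One ordering slip needs fixing. When you assert that restriction to $N$ is injective, note that the restriction of $\Ad_g\circ\theta$ is $\Ad_g|_N\circ\varphi$, not $\Ad_g|_N$. So deducing $\Ad_g|_N=\Ad_h|_N$ from equal restrictions needs $\varphi=\theta|_N$ to map onto $N$. You prove $\theta(N)=N$ in the very next bullet, so simply move the injectivity claim after it.
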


\begin{proof}
Suppose that $\Lambda(S,a)$ is a group, with identity $e$.
Then $e^2=e$. For every $g\in\pi_1(S,a)$, the inner-orbit
formula~\eqref{eq:lambda-inner-orbit} gives
$\Ad_g\circ e\in\Lambda(S,a)$, and hence
\[
 e\circ (\Ad_g\circ e)=\Ad_g\circ e.
\]
Thus $e$ fixes every element of
$g\operatorname{Im}(e)g^{-1}$. Since the fixed points of an
idempotent endomorphism are exactly its image, it follows that
\[
 g\operatorname{Im}(e)g^{-1}
 \subseteq\operatorname{Im}(e)
 \qquad(g\in\pi_1(S,a)).
\]
Applying the same inclusion to $g^{-1}$ shows that
$\operatorname{Im}(e)\trianglelefteq\pi_1(S,a)$.
Since $\theta_\alpha=\Ad_u\circ e$ for some
$u\in\pi_1(S,a)$, we also have
$\operatorname{Im}(\theta_\alpha)=\operatorname{Im}(e)$.
This proves (ii).

Conversely, suppose that
$\operatorname{Im}(\theta_\alpha)\trianglelefteq\pi_1(S,a)$.
Closure under composition and the inner-orbit formula give an
element $t\in\pi_1(S,a)$ such that
\[
 \theta_\alpha^2=\Ad_t\circ \theta_\alpha.
\]
Consequently, the restriction of $\theta_\alpha$ to its image
is conjugation by $t$. Define
\[
 e:=\Ad_{t^{-1}}\circ \theta_\alpha\in\Lambda(S,a).
\]
Normality implies
$\operatorname{Im}(e)=\operatorname{Im}(\theta_\alpha)$,
and $e$ restricts to the identity on its image. Thus $e^2=e$.
Both $\operatorname{Im}(e)$ and $\ker(e)$ are normal in
$\pi_1(S,a)$, and their intersection is trivial. Every
element $g\in\pi_1(S,a)$ factors as
\[
 g=e(g)\bigl(e(g)^{-1}g\bigr),\qquad
 e(g)\in\operatorname{Im}(e),\quad
 e(g)^{-1}g\in\ker(e).
\]
Hence these two normal subgroups centralize each other and
\begin{equation}\label{eq:fundamental-group-normal-orbit}
 \begin{aligned}
 \Lambda(S,a)
 &=\{\Ad_g\circ e:g\in\pi_1(S,a)\}\\
 &=\{\Ad_h\circ e:h\in\operatorname{Im}(e)\}.
 \end{aligned}
\end{equation}
For any $h,k\in\operatorname{Im}(e)$, since
$e|_{\operatorname{Im}(e)}=\id_{\operatorname{Im}(e)}$,
\[
 (\Ad_h\circ e)\circ (\Ad_k\circ e)=\Ad_{hk}\circ e.
\]
Therefore $e$ is the identity of $\Lambda(S,a)$, and the
inverse of $\Ad_h\circ e$ is $\Ad_{h^{-1}}\circ e$.
Restriction to $\operatorname{Im}(e)$ yields an isomorphism
\[
 \Lambda(S,a)\xrightarrow{\ \cong\ }
 \operatorname{Inn}\bigl(\operatorname{Im}(e)\bigr)
 =\operatorname{Inn}\bigl(\operatorname{Im}(\theta_\alpha)\bigr).
\]
This proves (i) and identifies the group structure.

Finally, the change-of-basepoint map
$C_\alpha:\pi_1(S,a^2)\to\pi_1(S,a)$ is an isomorphism,
and
\[
 \operatorname{Im}(\theta_\alpha)
 =C_\alpha\bigl(\operatorname{Im}(R_a)_*\bigr).
\]
Normality is preserved and reflected by isomorphisms, so
(ii) and (iii) are equivalent. The independence of the basepoint
follows from Lemma~\ref{lem:basepoint-independence}.
\end{proof}

\begin{corollary}[Direct-product splitting]
\label{cor:fundamental-group-direct-product-splitting}
If $\Lambda(S,a)$ is a group and $e$ is its identity, then
\[
 \pi_1(S,a)\cong\operatorname{Im}(e)\times\ker(e),
 \qquad
 \Lambda(S,a)\cong
 \operatorname{Im}(e)/Z\bigl(\operatorname{Im}(e)\bigr).
\]
\end{corollary}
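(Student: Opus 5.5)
The plan is to read both assertions off the proof of Theorem~\ref{thm:fundamental-semigroup-group-criterion}. There we saw that when $\Lambda(S,a)$ is a group, its identity $e$ is an idempotent endomorphism of $\pi_1(S,a)$ whose image $\operatorname{Im}(e)$ and kernel $\ker(e)$ are both normal. Moreover $\operatorname{Im}(e)\cap\ker(e)=1$, since $e$ restricts to the identity on its image. Every $g$ factors as $g=e(g)\bigl(e(g)^{-1}g\bigr)$ with $e(g)\in\operatorname{Im}(e)$ and $e(g)^{-1}g\in\ker(e)$.

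For the first isomorphism, I will use these three facts together. Two normal subgroups with trivial intersection commute elementwise, because for $x\in\operatorname{Im}(e)$ and $y\in\ker(e)$ the commutator $xyx^{-1}y^{-1}$ lies in both. Since they also generate $\pi_1(S,a)$, the standard internal direct product criterion applies. Concretely, the multiplication map $\operatorname{Im}(e)\times\ker(e)\to\pi_1(S,a)$ is a homomorphism because the factors commute. It is surjective by the factorization above and injective by the trivial intersection. I will quote this directly rather than redo it.

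For the second isomorphism, the theorem already gives $\Lambda(S,a)\cong\operatorname{Inn}(\operatorname{Im}(\theta_\alpha))$ together with $\operatorname{Im}(\theta_\alpha)=\operatorname{Im}(e)$. It remains to apply the standard isomorphism $\operatorname{Inn}(H)\cong H/Z(H)$, recalled in the Introduction, with $H=\operatorname{Im}(e)$. This uses the surjection $h\mapsto\Ad_h|_H$, whose kernel is $Z(H)$. It is worth noting explicitly that the restriction map $\Ad_h\circ e\mapsto \Ad_h|_{\operatorname{Im}(e)}$ from the proof is well defined and injective. The reason is that $\Ad_h\circ e$ is determined by its values on $\operatorname{Im}(e)$, since $e$ factors through its image.

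I expect no real obstacle: everything is bookkeeping from the previous proof. The only point needing care is that the identity of the group $\Lambda(S,a)$ is indeed the idempotent $e$ used in that proof. This holds because a group has a unique idempotent.
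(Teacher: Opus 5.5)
Your proposal is correct and follows the same route as the paper. Like the paper, you read the internal direct-product splitting off the proof of Theorem~\ref{thm:fundamental-semigroup-group-criterion} and combine $\Lambda(S,a)\cong\operatorname{Inn}(\operatorname{Im}(e))$ with the standard isomorphism $\operatorname{Inn}(H)\cong H/Z(H)$; your extra details (the commutator argument for elementwise commutation, and the uniqueness of the idempotent in a group) are exactly the bookkeeping the paper leaves implicit.
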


\begin{proof}
The first isomorphism is the internal direct-product splitting
constructed in the proof of
Theorem~\ref{thm:fundamental-semigroup-group-criterion}.
The second follows from the natural isomorphism between a group
modulo its center and its inner automorphism group.
\end{proof}

\begin{example}[A nonabelian fundamental group of semigroup type]
\label{ex:fundamental-semigroup-is-nonabelian-group}
Let $X=\mathbb S^1\vee\mathbb S^1$ with the left-zero
multiplication $xy=x$, and let $o$ be the wedge point. Then
$R_o=\id_X$ and $o^2=o$, so the constant path at $o$ gives
$\theta_{c_o}=\id_{\pi_1(X,o)}$. The inner-orbit formula yields
\[
 \begin{aligned}
 \Lambda(X,o)
 &=\{\Ad_g:g\in\pi_1(X,o)\}\\
 &=\operatorname{Inn}(\pi_1(X,o))
 \cong\pi_1(X,o).
 \end{aligned}
\]
Indeed, $\pi_1(X,o)$ is a free group of rank two and has
trivial center. Thus the fundamental semigroup can be a
nontrivial, nonabelian group. In contrast, the right-zero
multiplication $xy=y$ on the same space gives the one-element
group $\Lambda(X)=\{0\}$.
\end{example}

\begin{remark}[Abelian fundamental groups]
\label{rem:abelian-fundamental-semigroup}
If $\pi_1(S,a)$ is abelian, the inner-orbit formula
\eqref{eq:lambda-inner-orbit} shows that $\Lambda(S,a)$ is a
singleton, say $\Lambda(S,a)=\{e\}$. Closure under composition
then gives $e^2=e$. Thus, as an abstract semigroup, $\Lambda(S)$
is always the one-element semigroup in this case. Its unique
element, viewed as an endomorphism of $\pi_1(S,a)$, need not be the
identity, as the following example illustrates.
\end{remark}

\begin{example}[The circle]
\label{ex:circle-fundamental-semigroup}
Let $S=\mathbb S^1$ and choose the basepoint $a=1$, so that
$\pi_1(S,1)\cong\ZZ$. For the usual multiplication $z\cdot w=zw$,
right translation by $1$ is the identity, and hence
\[
 \Lambda(S,1)=\{\id_{\ZZ}\}.
\]
For the right-zero multiplication $z\cdot w=w$, the map $R_1$
is constant and
\[
 \Lambda(S,1)=\{0\},
\]
where $0$ denotes the zero endomorphism of $\ZZ$. For the left-zero
multiplication $z\cdot w=z$, one again has
$\Lambda(S,1)=\{\id_{\ZZ}\}$. Every point of $S$ is idempotent
for either zero multiplication, although its fundamental semigroup
has only one idempotent. More generally, an endomorphism
$n\mapsto kn$ of $\ZZ$ is idempotent precisely when $k^2=k$,
so $0$ and $\id_{\ZZ}$ are the only possible endomorphisms
representing a fundamental semigroup on the circle.

Thus $\{0\}$ and $\{\id_{\ZZ}\}$ are distinct subsemigroups of
$\operatorname{End}(\ZZ)$ that are isomorphic as abstract semigroups.
\end{example}

\begin{example}[Infinitely many fundamental idempotents]
\label{ex:commutative-infinite-idempotents}
Let $S=C_u\vee C_v$ be the wedge of two circles, with common
basepoint $o$. Identify $C_u$ with the group $\mathbb S^1$, taking
$o$ as its identity. Let $i:C_u\hookrightarrow S$ be the inclusion
and let $r:S\to C_u$ be the retraction that collapses $C_v$ to $o$.
Define
\[
 x\cdot y=i\bigl(r(x)r(y)\bigr).
\]
This multiplication is continuous because $r$, $i$, and the
multiplication on $C_u$ are continuous. It is commutative because
$C_u$ is abelian. Since $r\circ i=\id_{C_u}$, we have
\[
 (x\cdot y)\cdot z
 =i\bigl(r(x)r(y)r(z)\bigr)
 =x\cdot(y\cdot z).
\]
Thus $S$ is a commutative topological semigroup whose underlying
space is a finite connected one-dimensional CW complex.

The fundamental group $\pi_1(S,o)$ is free on the two circle loops
$u$ and $v$. Since $r(o)=1$ and $i(1)=o$, we have $o^2=o$ and
$R_o=i\circ r$. We may therefore choose the constant connecting
path $c_o(t)=o$ from $o$ to $o^2$. Its change-of-basepoint
isomorphism is $C_{c_o}=\id_{\pi_1(S,o)}$, so this choice gives
\[
 e:=C_{c_o}\circ (R_o)_*
 =(R_o)_*=(i\circ r)_*\in\Lambda(S,o).
\]
The map $i\circ r$ fixes $C_u$ pointwise and collapses $C_v$ to $o$.
Hence
\[
 e(u)=u,\qquad e(v)=1,
\]
where $1$ denotes the identity element of $\pi_1(S,o)$.
In particular, $e(\pi_1(S,o))=\langle u\rangle$, and $e$ restricts
to the identity on this subgroup. Thus $e^2=e$.
By \eqref{eq:lambda-inner-orbit},
\[
 \Lambda(S,o)=\{e_g:=\Ad_g\circ e:g\in\pi_1(S,o)\},
 \qquad
 e_g(u)=gug^{-1},\quad e_g(v)=1.
\]
For every $h\in\pi_1(S,o)$, the element $e(h)$ belongs to the cyclic group
$e(\pi_1(S,o))=\langle u\rangle$ and therefore centralizes $e(\pi_1(S,o))$.
Consequently, \eqref{eq:fundamental-idempotent-product} yields
\[
 e_g\circ e_h=\Ad_{g e(h)}\circ e=\Ad_g\circ e=e_g
 \qquad(g,h\in\pi_1(S,o)).
\]
In particular, every element of $\Lambda(S,o)$ is idempotent.
The elements $e_{v^n}$, $n\in\ZZ$, are pairwise distinct, since
\[
 e_{v^n}(u)=v^nuv^{-n},\qquad e_{v^n}(v)=1,
\]
and the words $v^nuv^{-n}$ are distinct in the free group.
It follows that $\Lambda(S,o)=E(\Lambda(S,o))$ is a countably
infinite left-zero semigroup.

This example also shows that commutativity of $S$ does not force
$\pi_1(S,o)$ to be abelian or $\Lambda(S,o)$ to be a singleton.
The realization result proved below applies to every one of these
idempotents, and Theorem~\ref{thm:main}(b) ensures that the
corresponding associative lifts are all isomorphic over $S$.
\end{example}

\begin{remark}[Pointwise-convergence topology]
\label{rem:fundamental-topology}
Give $\pi_1(S,a)$ the discrete topology. On
$\operatorname{End}(\pi_1(S,a))$ use the topology of pointwise
convergence, as a subspace of $\pi_1(S,a)^{\pi_1(S,a)}$.
Composition is continuous, so $\Lambda(S,a)$
is a Hausdorff topological semigroup. It is zero-dimensional and
totally disconnected; in particular it is connected only when it
is a singleton. If $\pi_1(S,a)$ is finitely generated, this topology on
$\Lambda(S,a)$ is discrete. These assertions are also independent
of the existence of a universal covering.
\end{remark}

All the results of this section require only path-connectedness,
not the local hypotheses needed for universal covering spaces.
We now turn to the associative lifting problem.

\section{Lifts and the Associativity Defect}\label{sec:defect}

Throughout this section, $S$ satisfies the hypotheses of
Theorem~\ref{thm:main}, and $p:\widetilde S\to S$ denotes its universal
cover. We describe the deck data of a multiplication lift, the defect
of associativity, and a correction that removes the defect when an
associated endomorphism is idempotent.

\subsection{Lifts and deck transformations}

We use the standard lifting properties of universal coverings
\cite[Section~1.3]{Hatcher}. A continuous map from a path-connected,
locally path-connected, simply connected space lifts with any
prescribed value above one point. Two lifts from a connected domain
that agree at one point agree everywhere.
Write
\[
 G=\Deck(p)
 =\{g:\widetilde S\to\widetilde S\text{ a homeomorphism}:
       p\circ g=p\}.
\]
The group law is composition, with $(gh)(u)=g(h(u))$, and $G$ acts
on $\widetilde S$ on the left. Its action on each fiber is free and
transitive. Consequently, any two lifts of the same map from a
connected domain differ by a unique deck transformation. We write
$gu$ for the action and $\Ad_d(g)=dgd^{-1}$ for conjugation in $G$.

\begin{lemma}\label{lem:existence}
The map $F=m\circ (p\times p):\widetilde S\times \widetilde S\to S$ has a continuous lift
$\mu:\widetilde S\times \widetilde S\to \widetilde S$.
\end{lemma}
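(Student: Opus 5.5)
The plan is to apply the general lifting criterion for maps into a covering space (\cite[Section~1.3, Proposition~1.33]{Hatcher}). Under that criterion, a continuous map from a path-connected, locally path-connected space $Y$ into $S$ lifts through $p$ exactly when the image of $\pi_1(Y)$ lies in $p_*\pi_1(\widetilde S)$. Because $p$ is universal, $p_*\pi_1(\widetilde S)$ is trivial, so what is needed is a domain that is path-connected, locally path-connected and simply connected. The domain here is $Y=\widetilde S\times\widetilde S$, and the map is $F=m\circ(p\times p)$, which is continuous as a composite of continuous maps.

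First I will check the hypotheses on $Y$. By the convention fixed in the Introduction, $\widetilde S$ is path-connected and simply connected. It is also locally path-connected, since $p$ is a local homeomorphism onto the locally path-connected space $S$. Each of these properties carries over to finite products: a path in $\widetilde S\times\widetilde S$ is just a pair of paths, products of path-connected neighborhoods give a basis of path-connected neighborhoods, and
\[
 \pi_1(\widetilde S\times\widetilde S,(u_0,v_0))\cong\pi_1(\widetilde S,u_0)\times\pi_1(\widetilde S,v_0)=1.
\]
So $Y$ meets every requirement of the lifting criterion.

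Next, I fix a basepoint $(u_0,v_0)\in Y$ and put $s_0=F(u_0,v_0)=p(u_0)p(v_0)$. Since $\pi_1(Y)$ is trivial, $F_*\pi_1(Y,(u_0,v_0))$ is trivial and so lies in $p_*\pi_1(\widetilde S,\tilde s_0)$ for any $\tilde s_0\in p^{-1}(s_0)$; this fiber is nonempty because a covering map is surjective. The lifting criterion then gives a continuous $\mu:Y\to\widetilde S$ with $p\circ\mu=F$ and $\mu(u_0,v_0)=\tilde s_0$. This is exactly the compatibility condition~\eqref{eq:compatibility}.

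I do not expect a serious obstacle. The only point that needs care is local path-connectedness of the product domain, because the lifting criterion uses it, and this follows directly from the product topology as above. Since $\tilde s_0$ could be any point of the fiber, the same argument also shows that a lift exists with any prescribed value above $s_0$, which will be useful later.
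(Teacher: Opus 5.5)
Your proposal is correct and is essentially the paper's proof: you check that $\widetilde S\times\widetilde S$ is path-connected, locally path-connected and simply connected, then apply the lifting criterion to $F$ with an arbitrary prescribed point in the fiber over $F(u_0,v_0)$. One small point: in Hatcher's convention a covering map need not be surjective, so $p^{-1}(s_0)\neq\varnothing$ is better justified by connectedness of $S$ (fiber cardinality is locally constant and $\widetilde S\neq\varnothing$), though this does not affect the argument.
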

\begin{proof}
$\widetilde S\times \widetilde S$ is path-connected, locally path-connected and simply
connected: coordinate loops contract in $\widetilde S$. Apply the covering-space
lifting theorem to $F$, with any chosen point in the fiber above
$F(u_0,v_0)$ as its prescribed value at $(u_0,v_0)$.
\end{proof}

\subsection{The defect and its normalization}

Fix a continuous multiplication lift $\mu$. The two bracketings
$\mu(\mu(u,v),w)$ and $\mu(u,\mu(v,w))$ have the same projection
because multiplication on $S$ is associative. The following lemma
records the deck transformation relating them, together with the
equivariance identities needed to correct the lift.

\begin{lemma}\label{lem:deck-data}
For every continuous lift $\mu:\widetilde S\times\widetilde S
\to\widetilde S$ of $m\circ (p\times p)$, there are unique
endomorphisms $\lambda_\mu,\rho_\mu:G\to G$ and a unique element
$d_\mu\in G$ such that
\begin{align}
 \mu(gu,v)&=\lambda_\mu(g)\mu(u,v),&
 \mu(u,gv)&=\rho_\mu(g)\mu(u,v),\label{eq:equivariance}\\
 \mu(\mu(u,v),w)&=d_\mu\,\mu(u,\mu(v,w)).\label{eq:defect}
\end{align}
We call $d_\mu$ the \emph{associativity defect} of $\mu$; the lift
is associative exactly when $d_\mu=1_G$. The endomorphisms
$\lambda_\mu$ and $\rho_\mu$ describe the deck action in the first
and second variables, respectively.
The images of $\lambda_\mu$ and $\rho_\mu$ commute elementwise, and
\begin{align}
 \lambda_\mu^2&=\Ad_{d_\mu}\circ \lambda_\mu,&
 \rho_\mu^2&=\Ad_{d_\mu^{-1}}\circ \rho_\mu,
 \label{eq:squares}\\
 d_\mu^2&=\lambda_\mu(d_\mu)d_\mu\rho_\mu(d_\mu).
 \label{eq:pentagon}
\end{align}
Every other lift is $\mu_c:=c\mu$ for a unique $c\in G$.
Its deck data depend on $c$ according to
\begin{equation}\label{eq:change-deck-data}
 \begin{aligned}
 \lambda_{\mu_c}&=\Ad_c\circ \lambda_\mu,\\
 \rho_{\mu_c}&=\Ad_c\circ \rho_\mu,\\
 d_{\mu_c}&=c\lambda_\mu(c)d_\mu\rho_\mu(c)^{-1}c^{-1}.
 \end{aligned}
\end{equation}
In particular, the lift $\mu_c$ is associative if and only if
\begin{equation}\label{eq:correction}
 d_\mu=\lambda_\mu(c)^{-1}\rho_\mu(c).
\end{equation}
\end{lemma}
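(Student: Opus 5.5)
The plan is to derive every assertion from the uniqueness of lifts: two lifts of the same map from a connected domain differ by a unique deck transformation.

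\textbf{Existence and uniqueness of the deck data.} For fixed $g\in G$, the maps $(u,v)\mapsto\mu(gu,v)$ and $\mu$ both lift $m\circ(p\times p)$, since $p(gu)=p(u)$. Hence there is a unique $\lambda_\mu(g)\in G$ with $\mu(gu,v)=\lambda_\mu(g)\mu(u,v)$. Evaluating $\mu(ghu,v)$ in two ways and using freeness of the action gives $\lambda_\mu(gh)=\lambda_\mu(g)\lambda_\mu(h)$, so $\lambda_\mu$ is an endomorphism. The map $\rho_\mu$ is handled in the same way. The two bracketings $(u,v,w)\mapsto\mu(\mu(u,v),w)$ and $\mu(u,\mu(v,w))$ are lifts of the same map on the connected space $\widetilde S^3$, so they differ by a unique $d_\mu$. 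Associativity is then equivalent to $d_\mu=1_G$.

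\textbf{Commutation of images.} Compute $\mu(gu,hv)$ in two orders: it equals $\lambda_\mu(g)\rho_\mu(h)\mu(u,v)$ and also $\rho_\mu(h)\lambda_\mu(g)\mu(u,v)$. Freeness of the action then gives $\lambda_\mu(g)\rho_\mu(h)=\rho_\mu(h)\lambda_\mu(g)$.

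\textbf{The square identities.} Apply \eqref{eq:defect} to $(gu,v,w)$. The left side is $\lambda_\mu(\lambda_\mu(g))\mu(\mu(u,v),w)$. The right side is $d_\mu\lambda_\mu(g)\mu(u,\mu(v,w))=d_\mu\lambda_\mu(g)d_\mu^{-1}\mu(\mu(u,v),w)$. Comparing the two gives $\lambda_\mu^2=\Ad_{d_\mu}\lambda_\mu$. Applying \eqref{eq:defect} to $(u,v,gw)$ gives $\rho_\mu^2(g)d_\mu=d_\mu\rho_\mu(g)$ in the same way.

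\textbf{The pentagon identity.} This is the step that needs the most care. Expand $\mu(\mu(\mu(u,v),w),x)$ into $\mu(u,\mu(v,\mu(w,x)))$ along the two paths of Mac Lane's pentagon, pulling each defect out through $\lambda_\mu$ or $\rho_\mu$ by equivariance.
\begin{itemize}
\item The short path uses two moves: first the bracketing $(\mu(u,v)\,w)\,x\to\mu(u,v)\,(w\,x)$, then $(u\,v)(wx)\to u\,(v(wx))$. This yields $d_\mu^2$.
\item The long path uses three moves. The inner reassociation in the first factor contributes $\lambda_\mu(d_\mu)$. The middle move contributes $d_\mu$. The inner move in the second factor contributes $\rho_\mu(d_\mu)$.
\end{itemize}
The ordering of the factors must be tracked carefully. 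By freeness, the two results give $d_\mu^2=\lambda_\mu(d_\mu)\,d_\mu\,\rho_\mu(d_\mu)$.

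\textbf{Change of lift.} Any other lift differs from $\mu$ by a unique $c$, so it has the form $\mu_c=c\mu$. Then $\mu_c(gu,v)=c\lambda_\mu(g)c^{-1}\mu_c(u,v)$, which gives $\lambda_{\mu_c}=\Ad_c\circ\lambda_\mu$, and $\rho_{\mu_c}=\Ad_c\circ\rho_\mu$ follows likewise. For the defect, write
\[
\mu_c(\mu_c(u,v),w)=c\,\lambda_\mu(c)\,\mu(\mu(u,v),w)=c\,\lambda_\mu(c)\,d_\mu\,\mu(u,\mu(v,w)).
\]
On the other side,
\[
\mu(u,\mu(v,w))=\rho_\mu(c)^{-1}c^{-1}\mu_c(u,\mu_c(v,w)).
\]
Combining the two gives the stated formula for $d_{\mu_c}$. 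Setting $d_{\mu_c}=1$ and cancelling yields \eqref{eq:correction}.
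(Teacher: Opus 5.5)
Your proposal follows the paper's proof step for step. Uniqueness of lifts produces $\lambda_\mu$, $\rho_\mu$ and $d_\mu$. Computing $\mu(gu,hv)$ in two orders gives commutation of the images. Substituting into \eqref{eq:defect} gives the square identities, the two expansions of the fourfold product give \eqref{eq:pentagon}, and a direct computation handles $c\mu$. All of this is correct, including the ordering $\lambda_\mu(d_\mu)\,d_\mu\,\rho_\mu(d_\mu)$ in the pentagon and the formula for $d_{\mu_c}$, with one exception.

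The exception is the relation you record for $\rho_\mu$, namely $\rho_\mu^2(g)d_\mu=d_\mu\rho_\mu(g)$, which has $\rho_\mu$ and $\rho_\mu^2$ interchanged. Taken as written, it gives $\rho_\mu^2=\Ad_{d_\mu}\circ\rho_\mu$. That is not the asserted identity; the two agree only if $d_\mu^2$ centralizes $\rho_\mu(G)$. The computation is not literally ``the same way'' as for $\lambda_\mu$, because the roles of the two bracketings are reversed. Substituting $gw$ for $w$, the left bracketing produces a single $\rho_\mu(g)$, while the double application occurs in the right bracketing:
\begin{align*}
 \mu(\mu(u,v),gw)&=\rho_\mu(g)\,\mu(\mu(u,v),w)=\rho_\mu(g)d_\mu\,\mu(u,\mu(v,w)),\\
 d_\mu\,\mu(u,\mu(v,gw))&=d_\mu\,\mu\bigl(u,\rho_\mu(g)\mu(v,w)\bigr)=d_\mu\rho_\mu^2(g)\,\mu(u,\mu(v,w)).
\end{align*}
Freeness of the action gives $\rho_\mu(g)d_\mu=d_\mu\rho_\mu^2(g)$, that is, $\rho_\mu^2=\Ad_{d_\mu^{-1}}\circ\rho_\mu$, as in \eqref{eq:squares}. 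With this correction your argument coincides with the paper's.
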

\begin{proof}
Fix a continuous lift $\mu$.
For any $g\in G$, the maps $\mu(gu,v)$ and $\mu(u,v)$ lift the same
map from the connected space $\widetilde S^{\,2}$. They differ by a
unique deck transformation $\lambda_\mu(g)$. The same argument gives
$\rho_\mu(g)$. Applying two deck transformations in succession shows
that $\lambda_\mu(gh)=\lambda_\mu(g)\lambda_\mu(h)$ and similarly for
$\rho_\mu$. Changing the two inputs independently gives
\[
 \mu(gu,hv)=\lambda_\mu(g)\rho_\mu(h)\mu(u,v)
           =\rho_\mu(h)\lambda_\mu(g)\mu(u,v).
\]
Since the deck action is free,
$\lambda_\mu(g)\rho_\mu(h)=\rho_\mu(h)\lambda_\mu(g)$.
The two bracketings lift the same map on connected $\widetilde S^{\,3}$,
which gives the unique $d_\mu$ in \eqref{eq:defect}.

Set $L=\mu(\mu(u,v),w)$ and $R=\mu(u,\mu(v,w))$, so $L=d_\mu R$.
Replacing $u$ by $gu$ in this equality gives
$\lambda_\mu^2(g)L=d_\mu\lambda_\mu(g)R$, hence
$\lambda_\mu^2(g)=d_\mu\lambda_\mu(g)d_\mu^{-1}$.
Replacing $w$ by $gw$ gives
$\rho_\mu(g)L=d_\mu\rho_\mu^2(g)R$, hence
$\rho_\mu^2(g)=d_\mu^{-1}\rho_\mu(g)d_\mu$.
These are \eqref{eq:squares}.

To prove \eqref{eq:pentagon}, let $u,v,w,z\in\widetilde S$ be arbitrary.
The associativity-defect identity
\[
 \mu(\mu(u,v),w)=d_\mu\,\mu(u,\mu(v,w))
\]
gives two expansions:
\[
\begin{aligned}
 \mu(\mu(\mu(u,v),w),z)
 &=d_\mu\,\mu(\mu(u,v),\mu(w,z))\\
 &=d_\mu^2\,\mu(u,\mu(v,\mu(w,z))),
\end{aligned}
\]
whereas successive changes inside the first input, then outside,
then inside the second input give
\[
\begin{aligned}
 \mu(\mu(\mu(u,v),w),z)
 &=\lambda_\mu(d_\mu)\,\mu(\mu(u,\mu(v,w)),z)\\
 &=\lambda_\mu(d_\mu)d_\mu\,\mu(u,\mu(\mu(v,w),z))\\
 &=\lambda_\mu(d_\mu)d_\mu\rho_\mu(d_\mu)\,
   \mu(u,\mu(v,\mu(w,z))).
\end{aligned}
\]
Freeness of the deck action gives \eqref{eq:pentagon}.

Finally, all lifts differ from $\mu$ by a single deck transformation.
For $\mu_c=c\mu$, equivariance gives
\begin{align*}
 \mu_c(gu,v)&=c\lambda_\mu(g)c^{-1}\mu_c(u,v),\\
 \mu_c(u,gv)&=c\rho_\mu(g)c^{-1}\mu_c(u,v),
\end{align*}
proving the first two formulas in \eqref{eq:change-deck-data}.
For the two bracketings, direct calculation yields
\begin{align*}
 \mu_c(\mu_c(u,v),w)
 &=c\lambda_\mu(c)d_\mu\,\mu(u,\mu(v,w)),\\
 \mu_c(u,\mu_c(v,w))
 &=c\rho_\mu(c)\,\mu(u,\mu(v,w)).
\end{align*}
The corresponding defect element is
\[
 d_{\mu_c}=c\lambda_\mu(c)d_\mu\rho_\mu(c)^{-1}c^{-1},
\]
which proves the third formula in \eqref{eq:change-deck-data}.
The two bracketings agree exactly when
$\lambda_\mu(c)d_\mu=\rho_\mu(c)$, that is, when
\eqref{eq:correction} holds.
\end{proof}

The next lemma gives an explicit correction in the idempotent case.

\begin{lemma}[Normalization]\label{lem:normalize}
Let $\mu$ be a continuous multiplication lift. If either
$\lambda_\mu$ or $\rho_\mu$ is idempotent, then $\mu_c=c\mu$ is
associative, where
\[
 c:=\rho_\mu(d_\mu)\lambda_\mu(d_\mu)^{-1}.
\]
Moreover, if $\lambda_\mu$ is idempotent, then
$\lambda_{\mu_c}=\lambda_\mu$; if $\rho_\mu$ is idempotent,
then $\rho_{\mu_c}=\rho_\mu$.
\end{lemma}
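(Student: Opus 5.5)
The plan is to verify the correction criterion \eqref{eq:correction} from Lemma~\ref{lem:deck-data}, namely $d=\lambda(c)^{-1}\rho(c)$, by direct algebra in $G$. Here I abbreviate $\lambda=\lambda_\mu$, $\rho=\rho_\mu$ and $d=d_\mu$. Besides \eqref{eq:squares}, \eqref{eq:pentagon} and the elementwise commutation of $\operatorname{Im}\lambda$ with $\operatorname{Im}\rho$, I need one further identity relating the two composites $\lambda\circ\rho$ and $\rho\circ\lambda$. I will derive it exactly as in the proof of Lemma~\ref{lem:deck-data}: replace $v$ by $gv$ in \eqref{eq:defect}. The left side $\mu(\mu(u,gv),w)$ becomes $\lambda(\rho(g))\,\mu(\mu(u,v),w)$, and the right side $\mu(u,\mu(gv,w))$ becomes $\rho(\lambda(g))\,\mu(u,\mu(v,w))$. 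Freeness of the action then gives
\[
 \lambda\circ\rho=\Ad_{d}\circ\rho\circ\lambda .
\]

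Suppose first that $\lambda$ is idempotent. Then \eqref{eq:squares} gives $\lambda=\Ad_d\circ\lambda$, so $d$ centralizes $\operatorname{Im}\lambda$. The key observation is that $\lambda(d)=1$. To see this, apply $\lambda$ to the pentagon identity \eqref{eq:pentagon}, which gives $\lambda(d)^2=\lambda^2(d)\lambda(d)\lambda(\rho(d))$. Using the pentagon again, $\rho(d)=d^{-1}\lambda(d)^{-1}d^2$, and since $\lambda(d)$ commutes with $d$, this simplifies to $\rho(d)=d\,\lambda(d)^{-1}$. Consequently $\lambda(\rho(d))=\lambda(d)\lambda(d)^{-1}=1$ by idempotence. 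The equation above then reads $\lambda(d)^2=\lambda(d)^2\cdot 1$; that is vacuous, so I will redo it more carefully. From $\rho(d)=d\lambda(d)^{-1}$ we get $\lambda(\rho(d))=\lambda(d)\lambda^2(d)^{-1}=1$. Substituting into the $\lambda$-image of the pentagon gives $\lambda(d)^2=\lambda(d)^2$, so this route alone does not force $\lambda(d)=1$.

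I will therefore not rely on $\lambda(d)=1$ and instead compute directly with $c=\rho(d)\lambda(d)^{-1}=d\lambda(d)^{-2}$.
\begin{itemize}[leftmargin=*]
\item \emph{$\lambda(c)$.} By idempotence, $\lambda(c)=\lambda(d)\lambda(d)^{-2}=\lambda(d)^{-1}$.
\item \emph{$\rho\circ\lambda(d)$.} By the new identity, $\rho(\lambda(d))=d^{-1}\lambda(\rho(d))\,d=1$.
\item \emph{$\rho(c)$.} Hence $\rho(c)=\rho(d)\,\rho(\lambda(d))^{-2}=d\lambda(d)^{-1}$.
\end{itemize}
Putting these together, $\lambda(c)^{-1}\rho(c)=\lambda(d)\,d\,\lambda(d)^{-1}=d$, since $d$ centralizes $\operatorname{Im}\lambda$. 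This is exactly \eqref{eq:correction}, so $\mu_c$ is associative. For the second claim, \eqref{eq:change-deck-data} gives $\lambda_{\mu_c}=\Ad_c\circ\lambda$. Here $c=d\lambda(d)^{-2}$, and both $d$ and $\lambda(d)$ centralize $\operatorname{Im}\lambda$, so $\Ad_c\circ\lambda=\lambda$.

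The case where $\rho$ is idempotent is the mirror image. Now \eqref{eq:squares} makes $d$ centralize $\operatorname{Im}\rho$. The pentagon gives $\lambda(d)=d^2\rho(d)^{-1}d^{-1}=d\rho(d)^{-1}$, hence $c=\rho(d)^2d^{-1}$. The identity $\lambda\rho=\Ad_d\rho\lambda$ gives $\lambda(\rho(d))=d\,\rho(d\rho(d)^{-1})\,d^{-1}=1$. Therefore $\lambda(c)=\lambda(d)^{-1}=\rho(d)d^{-1}$ and $\rho(c)=\rho(d)$. It follows that $\lambda(c)^{-1}\rho(c)=d\rho(d)^{-1}\rho(d)=d$, and $\Ad_c$ fixes $\operatorname{Im}\rho$ pointwise. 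The main obstacle is recognizing the need for the extra relation $\lambda\rho=\Ad_d\rho\lambda$, which is not listed in Lemma~\ref{lem:deck-data}. Once it is in hand, everything reduces to the bookkeeping above, where the main risk is sign and order errors in the group words.
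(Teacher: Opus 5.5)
Your proof is correct. Its skeleton is the paper's: verify \eqref{eq:correction} by algebra in $G$. The difference is how you obtain the cross-term vanishing.

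You introduce the middle-variable relation $\lambda\circ\rho=\Ad_d\circ\rho\circ\lambda$, which you derive correctly by replacing $v$ with $gv$ in \eqref{eq:defect}. You use it to convert $\lambda(\rho(d))=1$ into $\rho(\lambda(d))=1$ (or conversely), treating the two cases separately.

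The paper uses only the data of Lemma~\ref{lem:deck-data}:
\begin{enumerate}[label=(\roman*)]
\item In either case, \eqref{eq:pentagon} together with the one available commutation gives the symmetric factorization $d=\lambda(d)\rho(d)$.
\item Since $\lambda(d)$ and $\rho(d)$ commute, $d$ then commutes with both. Evaluating \eqref{eq:squares} at $d$ therefore gives $\lambda^2(d)=\lambda(d)$ and $\rho^2(d)=\rho(d)$, whichever map is idempotent.
\item Applying $\lambda$ and $\rho$ to the factorization kills both cross terms at once.
\end{enumerate}

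So the extra relation is not the ``main obstacle'' you describe. In your $\lambda$-idempotent case you already had $\rho(d)=d\lambda(d)^{-1}$, so $d$ commutes with $\rho(d)$. Evaluating $\rho^2=\Ad_{d^{-1}}\circ\rho$ at $d$ then gives $\rho^2(d)=\rho(d)$. Applying $\rho$ to $\rho(d)=d\lambda(d)^{-1}$ yields $\rho(\lambda(d))=1$ with no new identity; the $\rho$-idempotent case is symmetric. The paper's route is uniform and self-contained. Yours costs a case split, but it records a genuine additional coherence identity among $\lambda$, $\rho$ and $d$.

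Two clean-up points.
\begin{itemize}
\item Delete the false start. The claim $\lambda(d)=1$ is false in general: in Example~\ref{ex:left-zero-lifts}, with $G\cong\ZZ$ written additively, $\lambda=\id_{\ZZ}$ and $d=n$. Your final argument correctly does not use it.
\item Justify the assertion that $\lambda(d)$ centralizes $\operatorname{Im}\lambda$ (respectively, that $\rho(d)$ centralizes $\operatorname{Im}\rho$). Apply $\lambda$ to $dhd^{-1}=h$ for $h=\lambda(h)\in\operatorname{Im}\lambda$ and use idempotence, as the paper does.
\end{itemize}
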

\begin{proof}
Assume that either
$\lambda_\mu$ or $\rho_\mu$ is idempotent.
The elements $\lambda_\mu(d_\mu)$ and $\rho_\mu(d_\mu)$ commute,
since the images of $\lambda_\mu$ and $\rho_\mu$ commute elementwise.
If $\lambda_\mu$ is idempotent, \eqref{eq:squares} implies that
$d_\mu$ commutes with $\lambda_\mu(d_\mu)$.
If $\rho_\mu$ is idempotent, the same identities imply that
$d_\mu$ commutes with $\rho_\mu(d_\mu)$.

The pentagon identity reads
$d_\mu^2=\lambda_\mu(d_\mu)d_\mu\rho_\mu(d_\mu)$.
If $\lambda_\mu(d_\mu)$ commutes with $d_\mu$, this becomes
$d_\mu^2=d_\mu\lambda_\mu(d_\mu)\rho_\mu(d_\mu)$;
if $\rho_\mu(d_\mu)$ commutes with $d_\mu$, it becomes
$d_\mu^2=\lambda_\mu(d_\mu)\rho_\mu(d_\mu)d_\mu$.
Cancellation gives the same factorization in either case.

Thus
\begin{equation}\label{eq:factor-defect}
 d_\mu=\lambda_\mu(d_\mu)\rho_\mu(d_\mu).
\end{equation}
Consequently, $d_\mu$ commutes with both $\lambda_\mu(d_\mu)$
and $\rho_\mu(d_\mu)$.
Evaluating \eqref{eq:squares} at $d_\mu$ gives
\[
 \lambda_\mu^2(d_\mu)=\lambda_\mu(d_\mu),\qquad
 \rho_\mu^2(d_\mu)=\rho_\mu(d_\mu).
\]
Applying $\lambda_\mu$ and $\rho_\mu$ to
\eqref{eq:factor-defect} now gives
\[
 \lambda_\mu\bigl(\rho_\mu(d_\mu)\bigr)=1,\qquad
 \rho_\mu\bigl(\lambda_\mu(d_\mu)\bigr)=1.
\]
Thus, for $c:=\rho_\mu(d_\mu)\lambda_\mu(d_\mu)^{-1}$,
\[
 \lambda_\mu(c)=\lambda_\mu(d_\mu)^{-1},\qquad
 \rho_\mu(c)=\rho_\mu(d_\mu),
\]
and hence
\[
 \lambda_\mu(c)^{-1}\rho_\mu(c)
 =\lambda_\mu(d_\mu)\rho_\mu(d_\mu)
 =d_\mu.
\]
By \eqref{eq:correction}, $\mu_c$ is associative.

Suppose now that $\lambda_\mu$ is idempotent, and put
$H:=\lambda_\mu(G)$. By \eqref{eq:squares}, $d_\mu$
centralizes $H$. Applying $\lambda_\mu$ shows that
$\lambda_\mu(d_\mu)$ also centralizes $H$, since
$\lambda_\mu$ acts as the identity on $H$. Moreover,
$\rho_\mu(d_\mu)$ centralizes $H$ because the images of
$\lambda_\mu$ and $\rho_\mu$ commute elementwise.
Thus $c$ centralizes $H$, and \eqref{eq:change-deck-data} gives
\[
 \lambda_{\mu_c}=\Ad_c\circ \lambda_\mu=\lambda_\mu.
\]
Similarly, if $\rho_\mu$ is idempotent, then $d_\mu$ and
$\rho_\mu(d_\mu)$ centralize $\rho_\mu(G)$ by
\eqref{eq:squares}, while $\lambda_\mu(d_\mu)$
centralizes it by commutativity of the two images. Hence
$c$ centralizes $\rho_\mu(G)$ and
$\rho_{\mu_c}=\rho_\mu$.
\end{proof}

\subsection{The deck-endomorphism semigroup}

\begin{theorem}[Deck-endomorphism criterion for associative lifts]
\label{thm:deck-idempotents}
With the notation and hypotheses of this section, let
$m:S\times S\to S$ denote multiplication and put
\[
 \mathcal L_m(p)
 :=\{\lambda_\mu:\mu:\widetilde S\times\widetilde S
 \longrightarrow\widetilde S
 \text{ is a continuous lift of }m\}
 \subseteq\operatorname{End}(G).
\]
Then $\mathcal L_m(p)$ is a nonempty subsemigroup of
$\operatorname{End}(G)$ under composition. Moreover,
\begin{equation}\label{eq:realization-of-idempotents}
 E(\mathcal L_m(p))
 =\{\lambda_\nu:\nu\text{ is a continuous associative lift of }m\}.
\end{equation}
In particular, $m$ admits a continuous associative lift if and
only if $E(\mathcal L_m(p))\ne\varnothing$.
Equivalently, for any fixed continuous multiplication lift $\mu$,
this holds if and only if there exists $c\in G$ such that
\[
 (\Ad_c\circ \lambda_\mu)^2=\Ad_c\circ \lambda_\mu.
\]
\end{theorem}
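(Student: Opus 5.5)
By Lemma~\ref{lem:deck-data}, every continuous lift is $c\mu$ for a unique $c\in G$, and $\lambda_{c\mu}=\Ad_c\circ\lambda_\mu$. Fixing one lift $\mu$ (which exists by Lemma~\ref{lem:existence}), we therefore get
\[
 \mathcal L_m(p)=\{\Ad_c\circ\lambda_\mu:c\in G\}.
\]
In particular $\mathcal L_m(p)$ is nonempty. The plan is to prove closure under composition, then the realization formula \eqref{eq:realization-of-idempotents}, and finally read off the two equivalent criteria.

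\textbf{Closure.} Using \eqref{eq:squares}, for $c,c'\in G$ we have
\[
 (\Ad_c\circ\lambda_\mu)\circ(\Ad_{c'}\circ\lambda_\mu)
 =\Ad_c\circ\Ad_{\lambda_\mu(c')}\circ\lambda_\mu^2
 =\Ad_{c\lambda_\mu(c')d_\mu}\circ\lambda_\mu,
\]
and the right-hand side lies in $\mathcal L_m(p)$. This uses only that $\lambda_\mu$ is a homomorphism, so $\lambda_\mu\circ\Ad_{c'}=\Ad_{\lambda_\mu(c')}\circ\lambda_\mu$. Associativity of the operation is inherited from $\operatorname{End}(G)$.

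\textbf{Realization.} For the inclusion $\supseteq$, let $\nu$ be an associative lift. Then $d_\nu=1$, and \eqref{eq:squares} gives $\lambda_\nu^2=\lambda_\nu$, so $\lambda_\nu\in E(\mathcal L_m(p))$. For the inclusion $\subseteq$, let $e=\lambda_{\mu'}$ be idempotent for some lift $\mu'$. Lemma~\ref{lem:normalize} then produces $c$ such that $\mu'_c=c\mu'$ is associative, and it also gives $\lambda_{\mu'_c}=\lambda_{\mu'}=e$. So every idempotent is $\lambda_\nu$ for an associative lift $\nu$. This step carries the real content, but it has already been done in Lemma~\ref{lem:normalize}; the only care needed is to apply the lemma's ``moreover'' clause so that $\lambda$ is unchanged, and not just associativity achieved.

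\textbf{Criteria.} The existence equivalence is immediate from \eqref{eq:realization-of-idempotents}: an associative lift exists if and only if $E(\mathcal L_m(p))\ne\varnothing$. The reformulation in terms of a fixed $\mu$ follows from the orbit description $\mathcal L_m(p)=\{\Ad_c\circ\lambda_\mu:c\in G\}$: an idempotent exists exactly when $(\Ad_c\circ\lambda_\mu)^2=\Ad_c\circ\lambda_\mu$ for some $c\in G$. I expect no serious obstacle. The main point to check is that the composition formula uses the sign convention of \eqref{eq:squares} correctly, namely $\lambda_\mu^2=\Ad_{d_\mu}\circ\lambda_\mu$ rather than $\Ad_{d_\mu^{-1}}\circ\lambda_\mu$; closure does not depend on which conjugating element appears.
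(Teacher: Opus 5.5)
Your proposal is correct and follows the paper's proof essentially step for step. Like the paper, you use the orbit description $\mathcal L_m(p)=\{\Ad_c\circ\lambda_\mu:c\in G\}$, prove closure via $\lambda_\mu^2=\Ad_{d_\mu}\circ\lambda_\mu$, and realize each idempotent through Lemma~\ref{lem:normalize}, including its clause that $\lambda$ is unchanged.
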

\begin{proof}
Fix a continuous multiplication lift $\mu$, which exists by
Lemma~\ref{lem:existence}. By Lemma~\ref{lem:deck-data}, every
other lift is $g\mu$ for a unique $g\in G$, and
\[
 \mathcal L_m(p)=\{\Ad_g\circ \lambda_\mu:g\in G\}.
\]
Writing $d=d_\mu$, equation~\eqref{eq:squares} gives
$\lambda_\mu^2=\Ad_d\circ \lambda_\mu$. Hence, for $g,h\in G$,
\[
 (\Ad_g\circ \lambda_\mu)\circ (\Ad_h\circ \lambda_\mu)
 =\Ad_{g\lambda_\mu(h)d}\circ \lambda_\mu
 \in\mathcal L_m(p).
\]
Thus $\mathcal L_m(p)$ is a nonempty semigroup under composition.

If $\nu$ is associative, then $d_\nu=1_G$, and
\eqref{eq:squares} implies $\lambda_\nu^2=\lambda_\nu$.
Conversely, suppose that $e\in E(\mathcal L_m(p))$.
Choose a lift $\mu$ with $\lambda_\mu=e$. By
Lemma~\ref{lem:normalize}, the lift $\nu=c\mu$, where
\[
 c=\rho_\mu(d_\mu)\lambda_\mu(d_\mu)^{-1},
\]
is associative and satisfies $\lambda_\nu=\lambda_\mu=e$.
This proves~\eqref{eq:realization-of-idempotents} and the
existence criterion. The final equivalence follows from the
inner-orbit description of $\mathcal L_m(p)$ above.
\end{proof}

\begin{remark}[Non-uniqueness of multiplication lifts]
\label{rem:lift-nonuniqueness}
It is important to distinguish a deck endomorphism from a lift
inducing it. Write $\operatorname{Lift}(m,p)$ for the set of
continuous multiplication lifts. The assignment
\[
 \mathcal D:\operatorname{Lift}(m,p)\longrightarrow\mathcal L_m(p),
 \qquad \mathcal D(\mu)=\lambda_\mu,
\]
is surjective by definition, but in general not injective.
Fix a lift $\mu$. Every other lift is uniquely $b\mu$ for
some $b\in G$, and \eqref{eq:change-deck-data} shows that
\[
 \lambda_{b\mu}=\lambda_\mu
 \quad\Longleftrightarrow\quad
 b\in C_G\bigl(\lambda_\mu(G)\bigr),
\]
where $C_G(H)$ denotes the centralizer of $H$ in $G$.
Consequently,
\[
 \mathcal D^{-1}(\lambda_\mu)
 =\{b\mu:b\in C_G(\lambda_\mu(G))\}.
\]
In particular, if $G$ is abelian, all continuous multiplication
lifts induce the same endomorphism, although the lifts are
distinct when $G$ is nontrivial. The following example shows
that even an idempotent deck endomorphism may be induced by
non-associative lifts. Thus \eqref{eq:realization-of-idempotents}
asserts the existence of at least one associative representative
for every idempotent, not that all its representatives are associative.
\end{remark}

\begin{example}[A non-associative lift]\label{ex:left-zero-lifts}
Let $S=\RR/\ZZ$ with multiplication $x\cdot y=x$.
Every element of $S$ is idempotent (since
$x\cdot x=x$ for all $x\in S$).
Its universal covering is $p:\RR\to S$, $p(u)=[u]$.
For each $n\in\ZZ$, the map
\[
 \mu_n(u,v)=u+n
\]
is a continuous lift of the multiplication on $S$.
However,
\[
 \mu_n(\mu_n(u,v),w)=u+2n,
 \qquad
 \mu_n(u,\mu_n(v,w))=u+n.
\]
Thus $\mu_n$ is associative if and only if $n=0$.
This shows that even for a finite CW semigroup an arbitrary
continuous lift of multiplication need not be associative.

In the notation of Lemma~\ref{lem:deck-data}, identify $G$ with
$\ZZ$ acting by integer translations. Since
\[
 \mu_n(u+k,v)=\mu_n(u,v)+k\qquad(k,n\in\ZZ),
\]
we have $\lambda_{\mu_n}=\id_{\ZZ}$ for every $n\in\ZZ$.
Thus different lifts may induce the same endomorphism $\lambda_\mu$.
Moreover, $\lambda_{\mu_n}$ is always idempotent, although only
$\mu_0$ is associative: idempotence of the associated endomorphism
does not imply associativity of that particular lift.
\end{example}

\section{The Deck-Endomorphism Model and Proof of Theorem A}
\label{sec:proof-A}

Let $S$, $p:\widetilde S\to S$, and $G=\Deck(p)$ be as in
Section~\ref{sec:defect}. We identify the intrinsically defined
fundamental semigroup of Definition~\ref{def:fundamental-semigroup}
with a semigroup of deck endomorphisms, and then prove
Theorem~\ref{thm:main}. The change-of-basepoint
and covering-space facts used below are standard; see
\cite[Sections~1.1 and~1.3]{Hatcher}.

\subsection{The deck-endomorphism model}
\label{subsec:deck-endomorphism-semigroup}

Extending the notation used in Definition~\ref{def:fundamental-semigroup},
for a self-map $f:X\to X$ of a path-connected space and a path
$\alpha:x_0\to f(x_0)$, put
\[
 \theta_{f,\alpha}:=C_\alpha\circ f_*
 \in\operatorname{End}(\pi_1(X,x_0)).
\]
For $f=R_a$ and $\alpha:a\to a^2$, this agrees with the abbreviated
notation of Definition~\ref{def:fundamental-semigroup}:
\[
 \theta_{R_a,\alpha}=\theta_\alpha.
\]
For general self-maps and left translations we retain the map
subscript.
Here $C_\alpha([\ell])=[\alpha*\ell*\overline\alpha]$, with paths
concatenated from left to right. If $\beta$ has the same endpoints
as $\alpha$, the change-of-path formula is
\begin{equation}\label{eq:change-connecting-path}
 \theta_{f,\beta}=\Ad_b\circ \theta_{f,\alpha},
 \qquad b=[\beta*\overline\alpha]\in\pi_1(X,x_0).
\end{equation}
Thus changing the connecting path amounts to postcomposition by an
inner automorphism, which we call an \emph{output inner automorphism}.

Fix $a\in S$ and $u\in p^{-1}(a)$. The endpoint isomorphism
\begin{equation}\label{eq:endpoint-isomorphism}
 \Phi_u:\pi_1(S,a)\xrightarrow{\cong}G,
 \qquad [\ell]\longmapsto g_\ell,
\end{equation}
is characterized by the condition that the lift of $\ell$ starting
at $u$ ends at $g_\ell u$.

With our left-to-right convention for concatenating loops and the
left action of $G$, one has $g_{\ell_1*\ell_2}=g_{\ell_1}g_{\ell_2}$;
thus $\Phi_u$ is a homomorphism, rather than an antihomomorphism.

If $\widetilde f:\widetilde S\to\widetilde S$
lifts $f:S\to S$, uniqueness of lifts gives a homomorphism
$\varphi_{\widetilde f}:G\to G$ characterized by
$\widetilde f(gv)=\varphi_{\widetilde f}(g)\widetilde f(v)$.
Choose a path $\widetilde\alpha:u\to\widetilde f(u)$ and put
$\alpha=p\circ \widetilde\alpha$. Then
\begin{equation}\label{eq:covering-naturality}
 \varphi_{\widetilde f}
 =\Phi_u\circ \theta_{f,\alpha}\circ \Phi_u^{-1}.
\end{equation}
Indeed, if $\ell$ lifts from $u$ to $gu$, the loop
$\alpha*(f\circ \ell)*\overline\alpha$ lifts from $u$ to
$\varphi_{\widetilde f}(g)u$: its three pieces are
$\widetilde\alpha$, the image under $\widetilde f$ of the lift of
$\ell$, and $\varphi_{\widetilde f}(g)\overline{\widetilde\alpha}$.
The endpoint description of $\Phi_u$ therefore gives
\eqref{eq:covering-naturality}.

\subsection{Proof of Theorem A}

\begin{proof}[Proof of Theorem~\ref{thm:main}]
Fix $a\in S$ and $u_0\in p^{-1}(a)$. We work with $\Lambda(S,a)$
as a representative of $\Lambda(S)$.

\emph{Identification of the two models.}
We first show that conjugation by the endpoint isomorphism
$\Phi_{u_0}$ identifies $\Lambda(S,a)$ with $\mathcal L_m(p)$.
For a continuous multiplication lift $\mu$, define
$\widetilde R_a(v):=\mu(v,u_0)$. This is a lift of $R_a\circ p$,
and \eqref{eq:equivariance} gives
\[
 \widetilde R_a(gv)=\lambda_\mu(g)\widetilde R_a(v).
\]
Choose a path $\widetilde\alpha:u_0\to\mu(u_0,u_0)$ and put
$\alpha=p\circ \widetilde\alpha:a\to a^2$. Applying
\eqref{eq:covering-naturality} to $\widetilde R_a$ yields
\begin{equation}\label{eq:translation-deck-identification}
 \varphi_{\widetilde R_a}=\lambda_\mu
 =\Phi_{u_0}\circ \theta_\alpha\circ \Phi_{u_0}^{-1}.
\end{equation}

Given any path $\gamma:a\to a^2$, lift it from $u_0$ and let
$v\in p^{-1}(a^2)$ be its endpoint. The lifting theorem used in
Lemma~\ref{lem:existence} provides a multiplication lift $\mu$ with
$\mu(u_0,u_0)=v$. Taking $\widetilde\alpha$ to be the lift of $\gamma$
in \eqref{eq:translation-deck-identification}, we obtain
\[
 \Phi_{u_0}\bigl(C_\gamma\circ (R_a)_*\bigr)\Phi_{u_0}^{-1}
 =\lambda_\mu\in\mathcal L_m(p).
\]
Conversely, for any $\lambda_\mu\in\mathcal L_m(p)$, choose a path
from $u_0$ to $\mu(u_0,u_0)$ in $\widetilde S$. Its projection is a path
$\gamma:a\to a^2$, and the same identity gives the reverse inclusion.
These two inclusions, together with the fact that conjugation by a
group isomorphism preserves composition, establish the semigroup
isomorphism
\begin{equation}\label{eq:intrinsic-identification}
 \begin{aligned}
 \Lambda(S,a)&\xrightarrow{\cong}\mathcal L_m(p),\\
 \theta_\alpha&\longmapsto\Phi_{u_0}\circ \theta_\alpha\circ \Phi_{u_0}^{-1}.
 \end{aligned}
\end{equation}

\emph{Part (a): the idempotent criterion.}
By Theorem~\ref{thm:deck-idempotents}, a continuous associative
multiplication lift exists if and only if
$E(\mathcal L_m(p))\ne\varnothing$. The semigroup
isomorphism~\eqref{eq:intrinsic-identification} identifies this
condition with $E(\Lambda(S,a))\ne\varnothing$, proving
part~\textup{(a)}. The same theorem also shows that each
individual idempotent of $\Lambda(S,a)$ is realized by an
associative lift.

\emph{Part (b): uniqueness up to isomorphism over the base.}
Let $\mu$ and $\mu'$ be continuous associative lifts.
For $\mu$, we have $d_\mu=1_G$, and hence
\begin{equation}\label{eq:compare-idempotence}
 \lambda_\mu^2=\lambda_\mu,\qquad
 \rho_\mu^2=\rho_\mu.
\end{equation}
By Lemma~\ref{lem:deck-data}, there is a unique $c\in G$ such that
$\mu'=c\mu$. Since $\mu'$ is associative, the correction equation
\eqref{eq:correction} yields
\[
 \lambda_\mu(c)=\rho_\mu(c)=:h.
\]
Idempotence gives $\lambda_\mu(h)=h=\rho_\mu(h)$.
Set $g:=ch^{-2}\in G$. Then
\[
 \lambda_\mu(g)=\lambda_\mu(c)\lambda_\mu(h)^{-2}=h^{-1},
 \qquad
 \rho_\mu(g)=\rho_\mu(c)\rho_\mu(h)^{-2}=h^{-1}.
\]
Using \eqref{eq:equivariance}, we conclude that
\begin{equation}\label{eq:conjugacy}
 \begin{aligned}
 \mu'(gu,gv)
 &=c\,\mu(gu,gv)\\
 &=c\,\lambda_\mu(g)\rho_\mu(g)\mu(u,v)\\
 &=ch^{-2}\mu(u,v)=g\mu(u,v)
 \qquad(u,v\in\widetilde S).
 \end{aligned}
\end{equation}
Thus $g$ is the required topological semigroup isomorphism over $S$,
which completes part~\textup{(b)}.
\end{proof}

\begin{remark}\label{rem:left-translation-deck}
For any multiplication lift $\mu$, fix $a\in S$ and
$u_0\in p^{-1}(a)$, and let $\alpha$ be the projection of a path
from $u_0$ to $\mu(u_0,u_0)$. Applying the same argument to
$L_a(x)=ax$ and its lift $\widetilde L_a(v)=\mu(u_0,v)$ gives
\begin{equation}\label{eq:left-translation-deck-identification}
 \varphi_{\widetilde L_a}=\rho_\mu
 =\Phi_{u_0}\circ \theta_{L_a,\alpha}\circ \Phi_{u_0}^{-1},
 \qquad \theta_{L_a,\alpha}=C_\alpha\circ (L_a)_*.
\end{equation}
Since $C_\alpha$ and $\Phi_{u_0}$ are isomorphisms,
$\lambda_\mu$ is injective whenever $(R_a)_*$ is injective, and
$\rho_\mu$ is injective whenever $(L_a)_*$ is injective.
\end{remark}

\begin{remark}[Realization versus classification]
\label{rem:associative-lifts-idempotents}
Theorem~\ref{thm:deck-idempotents} and the semigroup
isomorphism~\eqref{eq:intrinsic-identification} show that every
$\alpha\in E(\Lambda(S,a))$ is realized by a continuous
associative lift $\nu$ such that, for $u_0\in p^{-1}(a)$,
\[
 \lambda_\nu=\Phi_{u_0}\circ \alpha\circ \Phi_{u_0}^{-1}.
\]
This is a realization statement, not a classification of
associative lifts by idempotents. As noted in
Remark~\ref{rem:lift-nonuniqueness} and
Example~\ref{ex:left-zero-lifts}, a lift inducing an
idempotent need not itself be associative. Moreover,
Theorem~\ref{thm:main}\textup{(b)} shows that all associative
lifts are isomorphic over $S$.
\end{remark}

\section{Proofs of Corollaries~\ref{cor:finite-inner},
\ref{cor:thompson-free} and~\ref{cor:CW}}

\label{sec:existence-criteria}

\subsection{Finite inner automorphism groups}

\begin{proof}[Proof of Corollary~\ref{cor:finite-inner}]
Fix $a\in S$ and a path $\alpha:a\to a^2$.
By the inner-orbit description \eqref{eq:lambda-inner-orbit},
\[
 \Lambda(S,a)=
 \bigl\{\varphi\circ \theta_\alpha:
 \varphi\in\operatorname{Inn}(\pi_1(S,a))\bigr\}.
\]
Hence
\[
 |\Lambda(S,a)|\leq
 |\operatorname{Inn}(\pi_1(S,a))|<\infty.
\]
Since $\Lambda(S,a)$ is a nonempty finite semigroup, it contains
an idempotent. Theorem~\ref{thm:main} then gives an associative
lift, unique up to isomorphism over $S$.
Finally, if $\pi_1(S,a)$ is abelian, its inner automorphism
group is trivial, whereas if $\pi_1(S,a)$ is finite, its inner
automorphism group is finite.
\end{proof}

\subsection{Fundamental groups without Thompson's group \texorpdfstring{$F$}{F}}

We recall the following purely group-theoretic consequence of
Freyd--Heller's Main Theorem~(T1), Main Corollary~(C1),
and Section~3 (L1) \cite{FreydHeller1993}.
Their result applies to an arbitrary group, not necessarily
a fundamental group.

\begin{lemma}[Freyd--Heller]\label{lem:freyd-heller-normalization}
Let $H$ be a group, and let $\varphi:H\to H$ be an endomorphism.
Suppose that, for some $d\in H$,
\[
 \varphi^2=\Ad_d\circ \varphi.
\]
Then there exists a canonical homomorphism from Thompson's
group $F$ to $H$,
\[
 \kappa_{\varphi,d}:F\longrightarrow H,
\]
defined by
\[
 \kappa_{\varphi,d}(x_n)=\varphi^n(d^{-1}),\qquad n\geq 0.
\]
Moreover, $\kappa_{\varphi,d}$ is injective if and only if
there is no $b\in H$ such that
\[
 (\Ad_b\circ \varphi)^2=\Ad_b\circ \varphi.
\]
In particular, if $H$ contains no subgroup isomorphic to
Thompson's group $F$, then there exists $b\in H$ such that
\[
 (\Ad_b\circ \varphi)^2=\Ad_b\circ \varphi.
\]
Equivalently, if the inner orbit of $\varphi$ contains no
idempotent endomorphism, then $F$ embeds in $H$.
\end{lemma}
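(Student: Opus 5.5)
The plan is to prove the lemma in three stages: the relation check, the reduction of injectivity to a commutation condition, and the two directions of the equivalence. Throughout, write $y_n:=\varphi^n(d^{-1})$ for the proposed images of the generators.

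\emph{Step 1: $\kappa_{\varphi,d}$ is a well-defined homomorphism.} We check that the $y_n$ satisfy the defining relations $x_i^{-1}x_jx_i=x_{j+1}$ for $0\le i<j$.
\begin{itemize}
\item Iterating $\varphi^2=\Ad_d\circ\varphi$ and applying $\varphi^k$ gives $\varphi^{k+2}=\varphi^k\circ\Ad_d\circ\varphi$.
\item Since $\varphi^i$ is a homomorphism,
\[
 y_i^{-1}y_jy_i=\varphi^i\bigl(d\,\varphi^{j-i}(d^{-1})\,d^{-1}\bigr)
 =\varphi^i\bigl(\Ad_d\circ\varphi(z)\bigr),
 \qquad z:=\varphi^{j-i-1}(d^{-1}),
\]
which makes sense because $j-i\ge1$.
\item The hypothesis turns $\Ad_d\circ\varphi(z)$ into $\varphi^2(z)$. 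Hence the right-hand side equals $\varphi^{j+1}(d^{-1})=y_{j+1}$.
\end{itemize}
By von Dyck's theorem the assignment $x_n\mapsto y_n$ extends to a homomorphism $F\to H$.

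\emph{Step 2: reduce injectivity to a single commutation condition.} We invoke the standard fact that every proper quotient of $F$ is abelian \cite[Section~4]{CannonFloydParry1996}. Hence $\kappa_{\varphi,d}$ is injective if and only if its image is nonabelian. Using the relation $x_0^{-1}x_1x_0=x_2$, the image is abelian exactly when $y_1=y_2$, that is, when $\varphi(d)=\varphi^2(d)$. Indeed, if $y_1=y_2$ then the relations force $y_1=y_2=y_3=\cdots$ and $y_0$ commutes with all of them. So the equivalence to prove becomes
\[
 \varphi(d)=\varphi^2(d)\iff\text{some }\Ad_b\circ\varphi\text{ is idempotent.}
\]

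\emph{Step 3: translate idempotence of a translate.} The computation already used in Theorem~\ref{thm:deck-idempotents} gives $(\Ad_b\circ\varphi)^2=\Ad_{b\varphi(b)d}\circ\varphi$. Therefore $\Ad_b\circ\varphi$ is idempotent if and only if $\varphi(b)d$ centralizes $\operatorname{Im}\varphi$.
\begin{itemize}
\item \emph{Forward direction} ($\varphi(d)=\varphi^2(d)\Rightarrow$ idempotent). I would first try $b=d^{-1}$ or $b=\varphi(d^{-1})$. One then uses the identity $\Ad_d|_{\operatorname{Im}\varphi}=\varphi|_{\operatorname{Im}\varphi}$, which follows from the hypothesis, together with $\varphi(d)=\varphi^2(d)$ to check the centralizing condition.
\item \emph{Reverse direction} (idempotent $\Rightarrow\varphi(d)=\varphi^2(d)$). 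Suppose $c:=\varphi(b)d$ centralizes $\operatorname{Im}\varphi$. Apply $\varphi$ to $d=\varphi(b)^{-1}c$. Since $\varphi^2(b)=d\varphi(b)d^{-1}$ and $\varphi(c)\in\operatorname{Im}\varphi$ commutes with $c$, one should be able to show that $x_0$ and $x_1$ commute after conjugation. Equivalently, $\kappa_{\varphi,d}$ and $\kappa_{\Ad_b\varphi,\,c'}$ for the new defect $c'$ differ by a conjugation and a shift of generators, and the latter map has abelian image because its defect is central in the relevant image.
\end{itemize}

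The last sentence of the lemma, about $H$ containing no copy of $F$, follows immediately from the equivalence.

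I expect Step 3 to be the main obstacle, in particular finding the explicit $b$ in the forward direction. If these direct manipulations do not close up, I would fall back on citing Freyd--Heller's Main Theorem~(T1) and Corollary~(C1) \cite{FreydHeller1993} for the equivalence. That is how the lemma is attributed in the paper, so Step 1 serves as a self-contained verification of the canonical homomorphism.
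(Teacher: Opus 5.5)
Your proposal is correct, and it takes a different route from the paper. The paper gives no argument of its own: it attributes the lemma to Freyd--Heller's (T1), (C1) and (L1), which is your fallback.

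Your Step 1 relation check is right. Invoking the Cannon--Floyd--Parry theorem that every proper quotient of $F$ is abelian correctly reduces injectivity of $\kappa_{\varphi,d}$ to the single condition $\varphi(d)=\varphi^2(d)$. Since the hypothesis gives $\varphi^2(d)=d\varphi(d)d^{-1}$, this condition just says that $d$ commutes with $\varphi(d)$.

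Step 3 is only sketched, but both directions close with the ingredients you name.

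\emph{Forward direction.} Computing $\varphi^3$ as $\varphi\circ\varphi^2$ and as $\varphi^2\circ\varphi$ gives
\[
 \Ad_{\varphi(d)d}\circ\varphi=\varphi^3=\Ad_{d^2}\circ\varphi .
\]
So $d^{-2}\varphi(d)d$ always centralizes $\operatorname{Im}\varphi$. When $d$ commutes with $\varphi(d)$, this element equals $d^{-1}\varphi(d)$. Its inverse $\varphi(d^{-1})d$ is exactly your centralizing condition for $b=d^{-1}$, so $\Ad_{d^{-1}}\circ\varphi$ is idempotent.

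\emph{Reverse direction.} Suppose $c=\varphi(b)d$ centralizes $\operatorname{Im}\varphi$. Then
$\varphi^2(b)=d\varphi(b)d^{-1}=\varphi(b)^{-1}c\,\varphi(b)\,c^{-1}\varphi(b)=\varphi(b)$.
Applying $\varphi$ to $c\varphi(b)=\varphi(b)c$ shows that $\varphi(c)$ commutes with $\varphi(b)$. Using also that $c$ commutes with $\varphi(c)\in\operatorname{Im}\varphi$, you get $\varphi^2(c)=d\varphi(c)d^{-1}=\varphi(b)^{-1}\varphi(c)\varphi(b)=\varphi(c)$. Therefore
$\varphi^2(d)=\varphi^3(b)^{-1}\varphi^2(c)=\varphi(b)^{-1}\varphi(c)=\varphi(d)$.
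The ``shift of generators'' heuristic is not needed.

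Your route gives a self-contained proof whose only external input is the quotient structure of $F$. It also produces an explicit normalizing element $b=d^{-1}$, which could be used directly in the proof of Corollary~\ref{cor:thompson-free}. The paper's citation is shorter and places the lemma within Freyd--Heller's theory of the universal conjugacy idempotent, but it leaves the equivalence unproved in the text.
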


\begin{proof}[Proof of Corollary~\ref{cor:thompson-free}]
Fix $a\in S$ and a path $\alpha:a\to a^2$.
By closure under composition and the inner-orbit
description~\eqref{eq:lambda-inner-orbit}, there exists
$d\in\pi_1(S,a)$ such that
\[
 \theta_\alpha^2
 =\Ad_d\circ \theta_\alpha.
\]
We apply Lemma~\ref{lem:freyd-heller-normalization} to the
group $\pi_1(S,a)$, with
$\varphi=\theta_\alpha$. The assumption on the
fundamental group therefore yields $b\in\pi_1(S,a)$ such that
\[
 (\Ad_b\circ \theta_\alpha)^2
 =\Ad_b\circ \theta_\alpha.
\]
By~\eqref{eq:lambda-inner-orbit},
$E(\Lambda(S,a))\ne\varnothing$.
Equivalently, the lemma yields the necessary condition
\begin{equation}\label{eq:freyd-heller-obstruction}
 E(\Lambda(S,a))=\varnothing
 \quad\Longrightarrow\quad
 F\hookrightarrow\pi_1(S,a).
\end{equation}
Theorem~\ref{thm:main} now supplies the associative lift and
its uniqueness up to isomorphism over $S$.

For the stated classes, recall that $F$ is torsion-free,
that $F'$ is infinite and nonabelian simple (so $F$ is not
solvable), and that every finite quotient of $F$ kills $F'$
(so $F$ is not residually finite). Thus torsion, solvable,
and residually finite groups cannot contain $F$.
Finally, a subgroup of a linear group
isomorphic to $F$ would be a finitely generated linear group,
hence residually finite by Mal'cev's theorem \cite{Malcev1940};
since $F$ is not residually finite, linear groups cannot contain
$F$ either (this case is also included in Freyd--Heller's Main
Corollary (C4)).
Finally, amenability is inherited by subgroups.
Since Thompson's group $F$ is nonamenable
\cite{OpenAI2026Nonamenable,KimLodha2026},
no amenable group can contain a subgroup isomorphic to $F$.
Thus the same conclusion holds for amenable fundamental groups.
\end{proof}

\subsection{Homotopy splitting for finite-dimensional CW complexes}

\begin{definition}
A continuous map $f:X\to X$ is a
\emph{homotopy idempotent} if
$f\sim f^2$, where $f^2=f\circ f$.
The map $f$ \emph{splits} if there exist a topological space $Y$ and
continuous maps $r:X\to Y$ and $i:Y\to X$ such that
\[
 f\sim i\circ r,\qquad r\circ i\sim\id_Y.
\]
The space $Y$ is called a \emph{splitting space}; it is not required
to be a subspace of $X$. The map $r$ is called a \emph{homotopy
retraction}, and $i$ is a \emph{homotopy section} of $r$.
The condition $r\circ i\sim\id_Y$ defines the homotopy retraction;
the additional condition $f\sim i\circ r$ makes it a splitting of
the specified map $f$. These conditions imply
$f\sim ir\sim i(ri)r\sim f^2$.
This is a condition on a map, and does not assert that $X$ contains
an idempotent point for any multiplication.
\end{definition}

\begin{theorem}[Hastings--Heller]\label{thm:HH}
Every homotopy idempotent on a finite-dimensional CW
complex (possibly disconnected) splits.
\end{theorem}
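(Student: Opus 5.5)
Since this theorem is quoted from \cite{HastingsHeller}, what follows is my own reconstruction of how I would try to prove it, not a summary of their argument. The plan is to separate an algebraic obstruction on $\pi_1$ from a purely homotopical construction, and to use finite dimensionality only to kill the algebraic obstruction.

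\textbf{Step 1: reduction to a connected complex.} Since $f\sim f^2$, the induced self-map of $\pi_0(X)$ is an honest idempotent. Every component is therefore sent by $f$ into a component that $f$ fixes. A splitting of $f$ restricted to each fixed component $X_j$ then assembles into a splitting of $f$ on $X$: take $Y=\coprod_j Y_j$, and let $r$ on an arbitrary component be $r_j\circ f$, where $X_j$ is the fixed component that this component maps into. So it suffices to treat a connected finite-dimensional CW complex $X$ with basepoint $x_0$.

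\textbf{Step 2: the algebraic obstruction.} Choose a path $\alpha:x_0\to f(x_0)$ and put $\varphi=\theta_{f,\alpha}\in\operatorname{End}(\pi_1(X,x_0))$. The homotopy $f^2\sim f$, combined with \eqref{eq:standard-homotopy} and \eqref{eq:standard-naturality}, gives $\varphi^2=\Ad_d\circ\varphi$ for some $d$; this is the same computation as in the closure-under-composition proof. I would then show that $f$ splits if and only if some inner translate $\Ad_b\circ\varphi$ is idempotent.

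For the ``if'' direction, replace $\alpha$ so that $e=\varphi$ is idempotent. Let $Y$ be the mapping telescope of $X\xrightarrow{f}X\xrightarrow{f}\cdots$, and let $r:X\to Y$ be the inclusion of the first stage. The map $i:Y\to X$ is defined stage by stage: send the $n$th copy of $X$ by $f$, and send each connecting cylinder by a homotopy $f\circ f\sim f$. The only real choice is the trace of these homotopies at the basepoint. Idempotence of $e$ is exactly what lets one choose the traces so that the induced maps on $\pi_1$ are compatible; otherwise one accumulates the conjugating elements $d,\varphi(d),\varphi^2(d),\dots$. With compatible traces, $\pi_1(Y)\cong\operatorname{Im}(e)$ and $\pi_k(Y)$ is the colimit of $f_*$. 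A Whitehead-type comparison (or a direct telescope argument on homotopy groups) should then give $r\circ i\sim\id_Y$ and $i\circ r\sim f$.

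\textbf{Step 3: the main obstacle, using finite dimensionality.} If no inner translate of $\varphi$ is idempotent, Lemma~\ref{lem:freyd-heller-normalization} makes $\kappa_{\varphi,d}:F\to\pi_1(X,x_0)$ injective, with $x_n\mapsto\varphi^n(d^{-1})$. The task is to derive a contradiction with $\dim X<\infty$. My first attempt would go through the twisted telescope $Y$ from Step~2, built using the non-compatible traces. Its fundamental group should be the colimit of $\varphi$ twisted by $d$, in which the images of the $x_n$ generate a copy of $F$. I would then pass to the covering of $Y$, or of $X$, associated with this copy of $F$ and try to show that its homology detects $H_*(F)$ in all degrees. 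This is plausible because $F$ contains $\mathbb Z^n$ for every $n$ and has infinite cohomological dimension. Such nonvanishing homology in arbitrarily high degrees is impossible for a complex of dimension $\dim X$, since telescopes and covers do not raise dimension beyond $\dim X+1$.

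The hard point is to show that this homology really is forced by the algebra alone. The higher homotopy of $X$ could in principle interfere, so one needs an argument showing that the unsplit idempotent makes the relevant cover behave like a $K(F,1)$ in a stable range. If that direct approach stalls, I would instead look for an obstruction-theoretic argument: construct the splitting cell by cell, so that the obstructions live in cohomology of $X$ with local coefficients in degrees at most $\dim X$, and show that these obstructions are exactly the ones that force $F$ to appear.
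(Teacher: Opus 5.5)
The paper gives no proof of this statement; it is quoted from \cite{HastingsHeller}. Judged on its own, your reconstruction has a genuine gap exactly where the theorem has content.

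Step~1 is fine. Steps~1 and~2, modulo the repairs noted below, reduce the theorem to showing that on a finite-dimensional complex the inner orbit of $\varphi=\theta_{f,\alpha}$ contains an idempotent. Equivalently, by Lemma~\ref{lem:freyd-heller-normalization}, you must show that $\kappa_{\varphi,d}$ is not injective. But Step~3, the only place where $\dim X<\infty$ enters, is a programme rather than an argument.

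The hope that the needed homology is forced ``by the algebra alone'' is wrong. Take the finite $2$-complex $X_F$ with one $0$-cell and $\pi_1(X_F)\cong F$ mentioned in the Introduction.
\begin{itemize}
\item Since $X_F$ is $2$-dimensional, the shift $x_n\mapsto x_{n+1}$ is induced by a based self-map $f$.
\item It satisfies $\varphi^2=\Ad_{x_0^{-1}}\circ\varphi$, so $d=x_0^{-1}$ and $\kappa_{\varphi,d}=\id_F$ is injective.
\item The maps $f$ and $f^2$ are freely homotopic on the $1$-skeleton.
\item Every covering of $X_F$ has zero homology above degree~$2$.
\end{itemize}
The theorem, Lemma~\ref{lem:splitting_group} and Lemma~\ref{lem:freyd-heller-normalization} together imply that this $f$ is \emph{not} a homotopy idempotent. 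So any proof must show that no homotopy $f^2\simeq f$ extends over the $2$-cells. That obstruction involves $\pi_2(X_F)$, and neither the $\pi_1$-data nor the homology of coverings detects it.

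What is missing is a mechanism that uses the full homotopy $f\simeq f^2$ and its interaction with $\pi_k(X)$ for $k\ge2$. Saying that the cover ``behaves like a $K(F,1)$ in a stable range'' names this step without supplying it, and the obstruction-theoretic fallback is not developed. Also, the mapping telescope of $f$ depends only on $f$. Its $\pi_1$ is $\operatorname{colim}(\pi_1(X),\varphi)$, with no twisting by $d$; the traces enter only in the map $i$ back to $X$.

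Step~2 also asserts more than it proves, though these gaps are repairable.

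First, the traces of homotopies $f^2\simeq f$ cannot be chosen at will. For based $f$ they form a coset of the subgroup of traces of self-homotopies of $f$. That subgroup lies in the centralizer of $\operatorname{Im}f_*$ but can be proper: for $\id$ on $S^1\vee S^2$ it is trivial, while the centralizer is $\ZZ$. Idempotence of $e=f_*$ only says that a given trace $\gamma$ centralizes $\operatorname{Im}e$. To obtain a based homotopy, proceed in two moves.
\begin{itemize}
\item Compare $H\circ f$ and $f\circ H$, which are two homotopies $f^3\simeq f^2$ with traces $\gamma$ and $e(\gamma)$. This lets you replace $\gamma$ by $e(\gamma)\in Z(\operatorname{Im}e)$.
\item Then drag the basepoint of $f$ once around the loop $e(\gamma)$, suitably oriented. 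This leaves $f_*=e$ unchanged and makes the trace null.
\end{itemize}

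Second, Whitehead's theorem only makes $r\circ i$ a homotopy equivalence. You still need $(ri)^2\simeq ri$ to conclude $ri\simeq\id_Y$, since an idempotent homotopy equivalence is homotopic to the identity. Once $f$ is homotopic to a based homotopy idempotent, the cleanest finish is Freyd's splitting theorem for pointed homotopy idempotents on connected CW complexes.
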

This is the classical splitting theorem of
Hastings--Heller~\cite{HastingsHeller}.
For a modern treatment of the splitting argument for homotopy
idempotents, see Dydak~\cite[Section~5]{Dydak}.

\subsection{Normalization from a homotopy splitting}
\label{subsec:normalization-splitting}

The standard naturality and homotopy identities for change of
basepoint, recalled in Section~\ref{sec:fundamental-semigroup}
as~\eqref{eq:standard-naturality} and~\eqref{eq:standard-homotopy},
will also be used in the normalization argument below.

The following lemma is a consequence of the splitting principle for
idempotents modulo inner automorphisms established by Freyd--Heller
\cite[Section~3, (L1), p.~98]{FreydHeller1993}.
For the reader's convenience, we give a direct proof using paths
and change-of-basepoint isomorphisms, avoiding the categorical
formulation of the general result.

\begin{lemma}\label{lem:splitting_group}
Let $f:X\to X$ be a continuous map on a path-connected space and
suppose that $f$ splits up to homotopy through a
path-connected space $Y$: there exist $r:X\to Y$ and $i:Y\to X$
with $f\sim ir$ and $ri\sim\id_Y$. Fix $x_0\in X$.
Then, for every path $\alpha:x_0\to f(x_0)$, there exists
$b\in\pi_1(X,x_0)$ such that
\[
 (\Ad_b\circ \theta_{f,\alpha})^2
 =\Ad_b\circ \theta_{f,\alpha}.
\]
\end{lemma}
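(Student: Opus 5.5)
The plan is to push the splitting down to fundamental groups, where it becomes an algebraic factorization of $\theta_{f,\alpha}$ up to inner automorphisms, and then to build an idempotent in the inner orbit from that factorization. Set $y_0:=r(x_0)$ and choose paths to produce endomorphisms
\[
 \rho:=C_\beta\circ r_*:\pi_1(X,x_0)\to\pi_1(Y,y_0),\qquad
 \iota:=C_\gamma\circ i_*:\pi_1(Y,y_0)\to\pi_1(X,x_0),
\]
where $\beta$ is any path in $Y$ from $y_0$ to $r(x_0)$ (it can be constant) and $\gamma$ is a path in $X$ from $x_0$ to $i(y_0)$. Path-connectedness of $X$ and $Y$ guarantees such paths exist. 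Everything below only uses homomorphisms between the two groups $\pi_1(X,x_0)$ and $\pi_1(Y,y_0)$, modulo inner automorphisms.

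\emph{Step 1: translate the two homotopies.} For the homotopy $ri\sim\id_Y$, I apply the homotopy identity~\eqref{eq:standard-homotopy} at $y_0$, and then naturality~\eqref{eq:standard-naturality} to move $C_\gamma$ through $r_*$. This gives
\[
 \rho\circ\iota=\Ad_k\ \text{ on }\pi_1(Y,y_0)\quad\text{for some }k\in\pi_1(Y,y_0).
\]
Since $\Ad_k$ is an automorphism, $\iota$ is injective and $\rho$ is surjective. Similarly, from $f\sim ir$ at $x_0$, together with the change-of-path formula~\eqref{eq:change-connecting-path}, I get
\[
 \theta_{f,\alpha}=\Ad_c\circ\iota\circ\rho\quad\text{for some }c\in\pi_1(X,x_0).
\]
The only fact needed here is that every composite of change-of-basepoint maps from $x_0$ to $x_0$, or from $y_0$ to $y_0$, is an inner automorphism.

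\emph{Step 2: absorb the inner automorphisms.} Replace $\rho$ by $\rho':=\Ad_{k^{-1}}\circ\rho$, so that $\rho'\circ\iota=\id$. Then $\theta_{f,\alpha}=\Ad_c\circ\iota\circ\Ad_k\circ\rho'$. Since $\iota$ is a homomorphism, $\iota\circ\Ad_k=\Ad_{\iota(k)}\circ\iota$, so
\[
 \theta_{f,\alpha}=\Ad_{c\,\iota(k)}\circ e,\qquad e:=\iota\circ\rho'.
\]
Now $e^2=\iota(\rho'\iota)\rho'=e$, so $b:=(c\,\iota(k))^{-1}$ satisfies $\Ad_b\circ\theta_{f,\alpha}=e$, which is idempotent.

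\emph{Main obstacle.} The algebra in Step 2 is immediate. The real work is the bookkeeping in Step 1: making sure each composite of change-of-basepoint maps and induced maps genuinely collapses to an \emph{inner} automorphism of the correct group. In particular, the trace of the homotopy $ri\sim\id_Y$ at $y_0$ has to be combined with $r\circ\gamma$ and $\beta$ to form a loop at $y_0$. I would carry this out carefully with the left-to-right concatenation convention, using $C_{\eta}\circ C_{\eta'}=C_{\eta*\eta'}$ and the fact that $C_\ell=\Ad_{[\ell]}$ for a loop $\ell$.
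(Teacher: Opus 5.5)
Your proposal is correct and is essentially the paper's argument: both proofs transport the homotopies $ri\sim\id_Y$ and $f\sim ir$ to fundamental groups via the naturality and homotopy identities, make $\pi_1(Y,y_0)$ an exact retract of $\pi_1(X,x_0)$, and observe that the resulting idempotent lies in the inner orbit of $\theta_{f,\alpha}$. The only difference is cosmetic. The paper makes $r_*\circ C_\sigma\circ i_*=\id$ exact geometrically, by appending the loop $i\circ\eta$ to the connecting path $\sigma$, whereas you keep arbitrary paths and absorb the discrepancy algebraically by replacing $\rho$ with $\Ad_{k^{-1}}\circ\rho$.
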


\begin{proof}
Put $y_0=r(x_0)$, $x_1=i(y_0)$, and
$y_1=r(x_1)=ri(y_0)$. Choose a homotopy from $\id_Y$ to $ri$,
and let $\delta:y_0\to y_1$ be the path traced by $y_0$.
The homotopy change-of-basepoint identity~\eqref{eq:standard-homotopy} gives
\begin{equation}\label{eq:normalized-ri}
 C_\delta\circ (ri)_*=\id_{\pi_1(Y,y_0)}.
\end{equation}

Choose any path $\sigma_0:x_0\to x_1$. Then
$r\circ \sigma_0:y_0\to y_1$, and we set
\[
 q=[(r\circ \sigma_0)*\overline\delta]\in\pi_1(Y,y_0).
\]
Let $\eta$ be a loop at $y_0$ representing $q^{-1}$, and
replace $\sigma_0$ with the path
\[
 \sigma:=\sigma_0*(i\circ \eta):x_0\to x_1.
\]
This choice eliminates the discrepancy between the two paths in $Y$:
by \eqref{eq:normalized-ri},
\[
\begin{aligned}
 {}[(r\circ \sigma)*\overline\delta]
 &=q\, [\delta*(ri\circ \eta)*\overline\delta]\\
 &=q\, C_\delta\bigl((ri)_*([\eta])\bigr)\\
 &=q[\eta]=1.
\end{aligned}
\]
Thus $r\circ \sigma$ and $\delta$, both paths from $y_0$ to $y_1$,
are homotopic relative to their endpoints. By the naturality
identity~\eqref{eq:standard-naturality},
\[
 r_*\circ C_\sigma\circ i_*
 =C_{r\circ \sigma}\circ (ri)_*
 =C_\delta\circ (ri)_*
 =\id_{\pi_1(Y,y_0)}.
\]
Consequently, the endomorphism
$C_\sigma\circ i_*\circ r_*$ of $\pi_1(X,x_0)$ is
idempotent, since
\[
\begin{aligned}
 (C_\sigma\circ i_*\circ r_*)^2
 &=C_\sigma\circ i_*\circ
 (r_*\circ C_\sigma\circ i_*)\circ r_*\\
 &=C_\sigma\circ i_*\circ r_*.
\end{aligned}
\]

Finally, choose a homotopy from $f$ to $ir$ and let
$\tau:f(x_0)\to ir(x_0)=x_1$ be the path traced by $x_0$.
Applying the homotopy identity~\eqref{eq:standard-homotopy} to this homotopy yields
\[
 \theta_{f,\alpha}
 =C_{\alpha*\tau}\circ i_*\circ r_*.
\]
Since $\sigma$ and $\alpha*\tau$ have the same endpoints, the
change-of-path formula gives
\[
 \Ad_b\circ \theta_{f,\alpha}
 =C_\sigma\circ i_*\circ r_*,\qquad
 b=[\sigma*\overline{(\alpha*\tau)}]\in\pi_1(X,x_0).
\]
Hence $(\Ad_b\circ \theta_{f,\alpha})^2
=\Ad_b\circ \theta_{f,\alpha}$, as required.
\end{proof}

\begin{proof}[Proof of Corollary~\ref{cor:CW}]
Fix $a\in S$ and choose a path $\alpha:a\to a^2$.
Since $S$ is path-connected, right translation $R_a$ is
homotopic to $R_{a^2}=R_a^2$: if $\gamma$ is a path from
$a$ to $a^2$, the homotopy is given by $(x,t)\mapsto x\gamma(t)$.
Thus $R_a$ is a homotopy idempotent.

Since $S$ is a finite-dimensional CW complex, the
Hastings--Heller theorem (Theorem~\ref{thm:HH}) implies
that $R_a$ splits up to homotopy.

The splitting space $Y$ is path-connected: $r(S)$ is
path-connected, and the homotopy $ri\sim\id_Y$ joins every
$y\in Y$ to $ri(y)\in r(S)$.

By Lemma~\ref{lem:splitting_group}, there exists
$b\in\pi_1(S,a)$ such that
\[
 (\Ad_b\circ \theta_\alpha)^2
 =\Ad_b\circ \theta_\alpha.
\]
By the inner-orbit description~\eqref{eq:lambda-inner-orbit},
$\Ad_b\circ \theta_\alpha\in\Lambda(S,a)$.
Hence $E(\Lambda(S))\ne\varnothing$.
The conclusion follows from Theorem~\ref{thm:main}.
\end{proof}

\section{Proof of Theorem B}\label{sec:proof-B}

We use the free strict monoidal groupoid of Fiore and Leinster
\cite{FioreLeinster} and their identification of the automorphism
group of its generating object with Thompson's group $F$.
This construction is closely related to the theory of diagram groups
and Squier complexes developed by Guba and Sapir \cite{GubaSapir1997}.

Our contribution is not the underlying categorical construction,
but its application to the associative lifting problem for
topological semigroups. We show that the tensor-induced multiplication
on the classifying space gives rise to a connected countable CW
topological semigroup whose multiplication admits continuous lifts
to the universal cover, but no associative lift. The obstruction is
detected by the injectivity of the homomorphisms induced by left and
right translations on the nonabelian fundamental group.

This provides the counterexample asserted in
Theorem~\ref{thm:counterexample} and shows that the
finite-dimensional hypothesis of Corollary~\ref{cor:CW}
cannot be omitted.

\subsection{A fundamental-group obstruction}
\label{app:group-obstruction}

Let $S$ be a path-connected, locally path-connected and semilocally
simply connected topological semigroup and let $p:\widetilde S\to S$ be its
universal cover. The group $G$ acts on the left. We use the deck data
and the correction equation of Lemma~\ref{lem:deck-data}, whose proof
did not use finite-dimensionality.

\begin{proposition}
\label{app:prop:obstruction}
Let $S$ and $p:\widetilde S\to S$ be as above, and let $a\in S$. Suppose that
both left and right translation by $a$ induce injections
\begin{align*}
 (L_a)_*&:\pi_1(S,a)\longrightarrow\pi_1(S,a^2),
 & L_a(x)&=ax,\\
 (R_a)_*&:\pi_1(S,a)\longrightarrow\pi_1(S,a^2),
 & R_a(x)&=xa.
\end{align*}
If $\pi_1(S,a)$ is nonabelian, no continuous associative lift of the
multiplication of $S$ exists on $\widetilde S$.
\end{proposition}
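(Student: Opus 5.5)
Suppose, for contradiction, that $\nu$ is a continuous associative lift. I plan to show that $G$ must then be abelian, which contradicts the hypothesis because $G\cong\pi_1(S,a)$ via the endpoint isomorphism $\Phi_{u_0}$.

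Since $d_\nu=1_G$, the identities \eqref{eq:squares} give $\lambda_\nu^2=\lambda_\nu$ and $\rho_\nu^2=\rho_\nu$. Next I use injectivity. Fix $u_0\in p^{-1}(a)$. The identification \eqref{eq:translation-deck-identification} writes $\lambda_\nu$ as $\Phi_{u_0}\circ C_\alpha\circ (R_a)_*\circ\Phi_{u_0}^{-1}$. Likewise, Remark~\ref{rem:left-translation-deck} writes $\rho_\nu$ as $\Phi_{u_0}\circ C_\alpha\circ (L_a)_*\circ\Phi_{u_0}^{-1}$. Here $\alpha$ is the projection of a path from $u_0$ to $\nu(u_0,u_0)$. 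Because $(R_a)_*$ and $(L_a)_*$ are injective, so are $\lambda_\nu$ and $\rho_\nu$.

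An injective idempotent endomorphism is the identity: from $\lambda_\nu(\lambda_\nu(g))=\lambda_\nu(g)$ and injectivity we get $\lambda_\nu(g)=g$. Hence $\lambda_\nu=\rho_\nu=\id_G$. Lemma~\ref{lem:deck-data} says the images of $\lambda_\nu$ and $\rho_\nu$ commute elementwise. These images are both all of $G$, so $G$ is abelian, which gives the contradiction.

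I do not expect a serious obstacle. The only point needing care is the transfer of injectivity from $(R_a)_*$ and $(L_a)_*$ to $\lambda_\nu$ and $\rho_\nu$. This needs the same connecting path $\alpha$ in both identifications. That holds because both come from the single point $\nu(u_0,u_0)$. It also needs $C_\alpha$ and $\Phi_{u_0}$ to be isomorphisms, as recorded in Remark~\ref{rem:left-translation-deck}.

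The argument needs no finite-dimensionality assumption. It uses only Lemma~\ref{lem:deck-data} and the covering-space identification, both of which hold under the standing hypotheses.
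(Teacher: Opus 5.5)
Your proposal is correct and follows the paper's proof essentially step by step. Associativity forces $d_\nu=1_G$ and hence idempotence of $\lambda_\nu,\rho_\nu$; injectivity transfers through \eqref{eq:translation-deck-identification} and Remark~\ref{rem:left-translation-deck}; injective idempotents are identities; and elementwise commutation of the images makes $G\cong\pi_1(S,a)$ abelian. (Insisting on the same connecting path $\alpha$ for both identifications is harmless but unnecessary, since each injectivity transfer works with any connecting path.)
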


\begin{proof}
Suppose that a compatible associative lift $\mu$ exists.
By Lemma~\ref{lem:deck-data}, associativity gives
$d_\mu=1$, and hence \eqref{eq:squares} implies
\[
 \lambda_\mu^2=\lambda_\mu,
 \qquad \rho_\mu^2=\rho_\mu.
\]
By \eqref{eq:translation-deck-identification} and
Remark~\ref{rem:left-translation-deck}, the assumed
injectivity of $(R_a)_*$ and $(L_a)_*$ implies that
both $\lambda_\mu$ and $\rho_\mu$ are injective.

An injective idempotent self-map of a set is necessarily the identity.
Therefore
\[
 \lambda_\mu=\rho_\mu=\id_G.
\]
But the images of $\lambda_\mu$ and $\rho_\mu$
commute elementwise by Lemma~\ref{lem:deck-data}.
Thus $G$ is abelian, contradicting
$G\cong\pi_1(S,a)$ and the hypothesis
that $\pi_1(S,a)$ is nonabelian.
\end{proof}

\begin{remark}
An alternative proof of Proposition~\ref{app:prop:obstruction}
can be obtained from the idempotent criterion of
Theorem~\ref{thm:main} and the inner-orbit description
of the fundamental semigroup.
The proof given above is more direct and makes
the obstruction transparent in terms of
deck transformations.
\end{remark}

\subsection{A classifying-space construction}
\label{app:Fiore-Leinster}

Following Fiore--Leinster \cite{FioreLeinster}, let $\mathcal A$ be
the free \emph{strict monoidal category} generated by an object $A$
and a specified isomorphism
\[
\alpha:A\otimes A\xrightarrow{\cong}A.
\]
Recall that a monoidal category has a tensor product $\otimes$
on objects and morphisms; ``strict'' means that associativity
holds exactly, rather than up to isomorphism. A \emph{groupoid}
is a category in which every morphism is invertible.

The category $\mathcal A$ is a countable groupoid: its morphisms
are finite expressions obtained from $\alpha$, $\alpha^{-1}$,
identities, composition, and tensor products. Fiore and
Leinster prove that
\begin{equation}\label{app:eq:Fiore-Leinster}
 \operatorname{Aut}_{\mathcal A}(A)\cong F,
\end{equation}
where $F$ is Thompson's nonabelian group
\cite[Theorem~1.1 and Section~3]{FioreLeinster}.

Let $\mathcal C$ be the full subcategory of $\mathcal A$
whose objects are $A^{\otimes n}$, $n\geq1$; thus we retain
all morphisms between these objects. It is a countable
connected groupoid, since $\alpha$ identifies
$A^{\otimes2}$ with $A$, and hence all positive tensor
powers are isomorphic. The tensor product restricts to a
strictly associative functor
$\mathcal C\times\mathcal C\to\mathcal C$.

Put
\[
 S_\infty=B\mathcal C=|N\mathcal C|.
\]
Here $N\mathcal C$ is the \emph{nerve} of $\mathcal C$:
its $k$-simplices are strings of $k$ composable morphisms.
The space $B\mathcal C$ is their geometric realization,
obtained by gluing the corresponding simplices.
In particular, each object $X$ determines a vertex
$v_X\in S_\infty$.

\begin{proof}[Proof of Theorem~\ref{thm:counterexample}]
Since $\mathcal C$ has countably many morphisms, $S_\infty$ is a
Hausdorff CW complex with countably many cells
\cite[Theorem~14.1]{May}. It is path-connected because
$\mathcal C$ is connected.
Because the nerve is a countable simplicial set, the realization
product theorem \cite[Theorem~14.3 and Remark~14.4]{May}
applies here to finite products with the ordinary product topology.
Hence the tensor functor induces a continuous multiplication on
$S_\infty$. Functoriality of realization and strict associativity
of the tensor product make this multiplication strictly associative,
not merely associative up to homotopy.

It satisfies
\[
 v_Xv_Y=v_{X\otimes Y}.
\]

Choose the point $a=v_A\in S_\infty$, namely the vertex
corresponding to the object $A$. Thus
$a^2=v_{A\otimes A}$; note that $a^2\ne a$, since
$A\otimes A$ and $A$ are distinct objects, although
isomorphic. The standard fundamental-group calculation
for classifying spaces of connected groupoids
\cite[Section~6]{Brown1987} gives
\[
 \pi_1(S_\infty,a)
 \cong\operatorname{Aut}_{\mathcal C}(A)
 \cong F,
\]
up to the harmless opposite-group convention.

The right and left translations by $a$ induce,
respectively, the homomorphisms
\[
 f\longmapsto f\otimes\id_A,
 \qquad
 f\longmapsto\id_A\otimes f
\]
from $\operatorname{Aut}_{\mathcal C}(A)$ to
$\operatorname{Aut}_{\mathcal C}(A\otimes A)$.

In the dyadic piecewise-linear model of Fiore--Leinster
\cite[Sections~2--3]{FioreLeinster}, these operations, after
identifying $A\otimes A$ with $A$, are rescaled copies of the
original homeomorphism supported on the left and right halves
of the unit interval, respectively. Restricting to the
corresponding half recovers the original homeomorphism.
Thus both induced homomorphisms are injective.

Since $F$ is nonabelian, Proposition~\ref{app:prop:obstruction}
excludes any associative lift of the multiplication, so
$E(\Lambda(S_\infty))=\varnothing$ by Theorem~\ref{thm:main}.
Continuous lifts exist by Lemma~\ref{lem:existence}.
Finally, repeating a
nonidentity automorphism of $A$ gives a nondegenerate
simplex in every dimension, so $S_\infty$ has cells
in arbitrarily high dimensions.
\end{proof}

\begin{remark}[Asphericity and infinite dimension]\label{rem:aspherical-F}
The infinite dimensionality of $S_\infty=B\mathcal C$ also follows
from its asphericity. Indeed, since $\mathcal C$ is a connected
groupoid with automorphism group isomorphic to Thompson's group $F$,
the space $S_\infty$ is a $K(F,1)$.

Suppose that $S_\infty$ had finite CW dimension $n$. Since $F$
contains a subgroup isomorphic to $\mathbb Z^{n+1}$, the
corresponding covering space $Y$ would be an $n$-dimensional
$K(\mathbb Z^{n+1},1)$. Hence $Y$ would be homotopy equivalent
to the $(n+1)$-dimensional torus $T^{n+1}$, giving
\[
H_{n+1}(Y;\mathbb Z)
\cong H_{n+1}(T^{n+1};\mathbb Z)
\cong \mathbb Z.
\]
This contradicts the fact that an $n$-dimensional CW complex has
vanishing homology in degrees greater than $n$. Thus $S_\infty$
is necessarily infinite-dimensional.
This agrees with the theorem of
Brown--Geoghegan \cite{BrownGeoghegan1984} that $F$ is a
torsion-free $\mathrm{FP}_\infty$ group of infinite
cohomological dimension.
\end{remark}

\section*{Declaration on the use of AI tools}
The ideas, results and proofs presented in this paper are entirely
due to the authors. Generative AI tools, namely Claude (Anthropic)
and ChatGPT (OpenAI), were used to assist in checking the formal
correctness of the arguments and in improving the presentation and
language of the manuscript. All outputs of these tools were
critically reviewed and verified by the authors, who take full
responsibility for the content of this paper.

\section*{Acknowledgements}
Andrea Loi acknowledges the support of the Fondazione di Sardegna
through the project ProBiki, and of INdAM--GNSAGA
(Gruppo Nazionale per le Strutture Algebriche, Geometriche e loro
Applicazioni).

\noindent Stefano Bonzio acknowledges the support of the Fondazione di Sardegna
through the project LoMEA, and of INdAM--GNSAGA
(Gruppo Nazionale per le Strutture Algebriche, Geometriche e loro
Applicazioni).

\bigskip
\noindent\textsc{Stefano Bonzio}\\
Department of Mathematics and Computer Science,\\
University of Cagliari, Via Ospedale 72, 09124 Cagliari, Italy\\
\textit{Email address:} \href{mailto:stefano.bonzio@unica.it}{\texttt{stefano.bonzio@unica.it}}

\medskip
\noindent\textsc{Andrea Loi}\\
Department of Mathematics and Computer Science,\\
University of Cagliari, Via Ospedale 72, 09124 Cagliari, Italy\\
\textit{Email address:} \href{mailto:loi@unica.it}{\texttt{loi@unica.it}}

\end{document}